\def\th@plain{%
  \thm@notefont{}
  \itshape 
}
\def\th@definition{%
  \thm@notefont{}
  \normalfont 
}
\newtheorem{lemma}{Lemma}[section]
\newtheorem{proposition}[lemma]{Proposition}
\newtheorem{remark-definition}[lemma]{Remark-Definition}
\newtheorem{theorem}[lemma]{Theorem}
\newtheorem{corollary}[lemma]{Corollary}
\newtheorem{proposition-conjecture}[lemma]{Proposition-conjecture}
\theoremstyle{definition}
\newtheorem{example}[lemma]{Example}
\newtheorem{definition}[lemma]{Definition}
\newtheorem{remark}[lemma]{Remark}
\newcommand{\Ker}{\mathrm{Ker}\,}
\newcommand{\St}{\mathrm{St}}
\newcommand{\rk}{\mathrm{rk}\,}
\newcommand{\gl}{\mathrm{gl}\,}
\newcommand{\goth}{\mathfrak}
\newcommand{\codim}{\mathrm{codim}\,}
\newcommand{\Sing}{\mathsf{Sing}}
\newcommand{\nv}{n_{{v}}}
\newcommand{\nh}{n_{{h}}}
\newcommand{\dimO}{\dim \mathcal O_{\mathrm{reg}}}
\newcommand{\codimO}{\codim \mathcal O_{\mathrm{reg}}}
\newcommand{\dimSt}{\dim \mathrm{St}_{\mathrm{reg}}}
 \renewcommand{\C}{\mathbb{C}}
 \newcommand{\C}{\mathbb{C}}
\newcommand\CP{\mathbb{C}\mathbb{P}}
 \renewcommand{\N}{\mathbb{N}}
 \newcommand{\N}{\mathbb{N}}
\newcommand{\charp}{\chi}
\def\minus{\hbox{-}}   
\definecolor{block}{RGB}{0,162,232}
\def\blockaux#1(#2,#3)#4(#5,#6){%
  \draw[fill={#1}]
  let \p1=(#2,#3),
      \p2=(#5,#6),
      \p3=(#2+#5,#3+#6),
      \p4=(#2+#5/2,#3+#6/2)
  in
    (\p1) rectangle (\p3)
    (\p4) node {$#4$}
  ;%
}
\begin{document}

\title{New techniques for calculation of \\ Jordan-Kronecker invariants for Lie algebras \\ and Lie algebra representations}
\author{I.\,K.~Kozlov\thanks{No Affiliation, Moscow, Russia. E-mail: {\tt ikozlov90@gmail.com} }
}
\date{}

\maketitle

\begin{abstract} We introduce two novel techniques that simplify calculation of Jordan-Kronecker invariants for a Lie algebra $\mathfrak{g}$ and for a Lie algebra representation $\rho$. First, the stratification of 	matrix pencils under strict equivalence puts restrictions on the Jordan-Kronecker invariants. Second, we show that the Jordan--Kronecker invariants of a semi-direct sum $\mathfrak{g} \ltimes_{\rho} V$ are sometimes determined by the Jordan--Kronecker invariants of the dual Lie algebra representation $\rho^*$. \end{abstract}

\tableofcontents

\section{Introduction}
\label{intro}

Let $A, B, C, D \in \operatorname{Mat}_{m\times n}(\mathbb{C})$ be complex $m\times n$ matrices. Two matrix pencils $A + \lambda B$ and $C + \lambda D$ are \textbf{strictly equivalent}, if there exist $P \in \operatorname{GL}(m)$ and $Q \in \operatorname{GL}(n)$ such that \[ PAQ = C, \qquad P BQ = D.\] The canonical form under the strict equivalence is the well-known \textbf{Kronecker Canonical Form} (KCF) (see \cite[Ch. XII, \S 5]{Gantmaher88} and Theorem~\ref{T:JK_operator}). If we can associate with any mathematical object a matrix pencil $A - \lambda B$, then it is natural to study its KCF. 

Following that idea, the \textbf{Jordan--Kronecker invariants of a} (complex finite-dimensional) \textbf{Lie algebra} $\mathfrak{g}$  were introduced by A.\,V.~Bolsinov and P.~Zhang in \cite{BolsZhang}. They can be described as follows (for details see Section~\ref{SubS:JKLieDef}).   Let $c^k_{ij}$ be the structural constants of $\mathfrak{g}$ (in some basis). For any $x, a \in \mathfrak{g}^*$ we get two skew-symmetric matrices \begin{equation} \label{Eq:PairLie} \mathcal{A}_x = \left( \sum_k c^k_{ij} x_k\right), \qquad \mathcal{A}_a = \left( \sum_k c^k_{ij} a_k\right).\end{equation}  The Jordan--Kronecker (JK) invariants of $\mathfrak{g}$ describe the KCF\footnote{It is natural to consider skew-symmetric matrix pencils under congruence. A classical result, see e.g. \cite[Ch. XII, \S 6, Theorem 6]{Gantmaher88}, is that two skew-symmetric matrix pencils are strictly equivalent
if and only if they are congruent. The canonical form for skew-symmetric matrix pencils under congruence is given by the Jordan--Kronecker theorem \ref{T:Jordan-Kronecker_theorem}.} (or, more precisely, the bundle) of a generic pencil \[\mathcal{P}_{x,a} = \mathcal{A}_x - \lambda \mathcal{A}_a, \qquad x, a \in \mathfrak{g}^*.\]  A \textbf{bundle} $\mathcal{B}(L)$ of a matrix pencil $L$ is the union (usually infinite) of all orbits that have the same KCF up to the specific values of the eigenvalues. Here ``generic pencil'' means that the bundle $\mathcal{B}\left(\mathcal{P}_{x,a}\right)$ is the same for all pairs $(x, a)$ from a non-empty Zariski open subset $U \subset \mathfrak{g}^* \times \mathfrak{g}^*$. Apart from  \cite{BolsZhang} and references therein, the Jordan--Kronecker invariants for  Lie algebras were studied by K.\,S.~Vorushilov in \cite{Vor1}, \cite{Vor2}, \cite{Vor3}, \cite{Vor4} and by A.\,A.~Garazha in \cite{Gar1}, \cite{Gar2}, \cite{Gar3}. 
 
Later, \textbf{Jordan--Kronecker invariants of a Lie algebra representation} $\rho: \mathfrak{g} \to \operatorname{gl} (V)$  were introduced by  A.\,V.~Bolsinov, A.\,M~Izosimov and the author in \cite{BolsIzosKozl19}.  They can be described as follows (for details see Section~\ref{SubS:JKReprDef}).  To each point  $x\in V$, the representation $\rho$ assigns a linear operator \begin{equation} \label{Eq:LInOperRepr}  \begin{aligned} R_x: &\goth g\to V, \\ R_x(\xi) = & \rho(\xi) x \in V. \end{aligned} \end{equation} The Jordan--Kronecker invariants of $\rho$ describe the KCF (or, more precisely, the bundle) of a generic pencil \[L_{x,a} = R_x - \lambda R_a, \qquad x, a \in V.\] As for Lie algebras, here ``generic'' means the bundle $\mathcal{B}\left(L_{x,a}\right)$ is the same for all pairs $(x, a)$ from a non-empty Zariski open subset $U \subset V \times V$. The Jordan--Kronecker invariants of  Lie algebra representations were also studied by the author in \cite{Kozlov23}.

In general, calculation of Jordan--Kronecker invariants may be a non-trivial task. We introduce two simple techniques that may help to simplify calculation of Jordan-Kronecker invariants for a Lie algebra $\mathfrak{g}$ and for a Lie algebra representation $\rho$.  

\subsection{JK invariants and stratification of bundles}

Our first idea is: \begin{enumerate} 

\item \textit{For a Lie algebra $\mathfrak{g}$ we can study the KCF for any pencil $\mathcal{P}_{x',a'}$, not necessarily a generic one.  This gives us information about the Jordan--Kronecker invariants, since the generic pencil $\mathcal{P}_{x,a}$ is a ``more generic" bundle than $\mathcal{P}_{x',a'}$. More precisely,  \[ \mathcal{B}\left( \mathcal{P}_{x',a'} \right) \subset \overline{\mathcal{B}}\left( \mathcal{P}_{x,a} \right). \] Similarly, we can study any pencil $L_{x',a'}$ for a Lie algebra representation $\rho$}. 

\end{enumerate}

The bundles of matrix pencil have a natural  stratification (see e.g.  \cite{Edelman99}, \cite{Pervouchine04}, \cite{Teran23}  and the references therein). About the stratification for bundles of skew-symmetric matrix pencil see e.g. \cite{Dmytryshyn14}.  Formally speaking, our first idea is to use the following obvious statement.

\begin{theorem} \label{T:GenericBundle} Let $V$ be a finite-dimensional complex vector space and let \[ f: V \to \operatorname{Mat}_{n\times m}(\mathbb{C}), \qquad f: x \to A_x\]  be a continuous map. Assume that there exist a  non-empty Zariski open subset $U \subset V \times V$ such that for all pairs $(x,a) \in U$ the pencils $L_{x,a} = A_{x} - \lambda A_a$ have the same bundle $\mathcal{B}(L_{x,a})$. Then for any $(x', a') \in V \times V$ there exist a sequence $(x_n,a_n) \in U, n\in \mathbb{N}$ such that \[ \lim_{n\to \infty} (x_n, a_n) = (x', a').\] Therefore, for the corresponding bundles we have the inclusion: \[ \mathcal{B}\left( L_{x',a'} \right) \subset \overline{\mathcal{B}}\left( L_{x,a} \right), \qquad \forall (x,a) \in U. \]   \end{theorem}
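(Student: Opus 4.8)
The plan is to combine two elementary facts — Euclidean density of a non-empty Zariski open set and the continuity of $f$ — with the one structural input from the stratification theory, namely that closures of bundles are unions of bundles. The density and continuity steps are routine; the genuine content sits in the final step.

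First I would produce the approximating sequence. Identify $V \times V$ with $\C^{2d}$, where $d = \dim V$. Since $U$ is a non-empty Zariski open subset, its complement is a proper algebraic subvariety, i.e. the common zero locus of finitely many polynomials, at least one of which is not identically zero (otherwise $U$ would be empty). Such a subvariety is nowhere dense in the Euclidean topology — it has Lebesgue measure zero — so $U$ is Euclidean-dense in $V \times V$. In particular every $(x', a')$ lies in the Euclidean closure of $U$, which yields a sequence $(x_n, a_n) \in U$ with $(x_n, a_n) \to (x', a')$, proving the first assertion.

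Next I would pass to the pencils. Viewing a pencil $A - \lambda B$ as the pair $(A, B)$ identifies the space of pencils with $\operatorname{Mat}_{n\times m}(\C) \times \operatorname{Mat}_{n\times m}(\C)$, and the assignment $(x, a) \mapsto (A_x, A_a)$ is continuous because $f$ is. By hypothesis all the pencils $L_{x_n, a_n}$ with $(x_n, a_n) \in U$ lie in one and the same bundle $\mathcal{B} := \mathcal{B}(L_{x,a})$. Continuity then gives $L_{x', a'} = \lim_{n\to\infty} L_{x_n, a_n} \in \overline{\mathcal{B}}$, the closure being taken in the Euclidean topology on the space of pencils (which, since a bundle is a constructible set, agrees with the Zariski closure).

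The last and only non-formal step is to upgrade the membership $L_{x',a'} \in \overline{\mathcal{B}}$ of a single pencil to the inclusion $\mathcal{B}(L_{x',a'}) \subset \overline{\mathcal{B}}$ of its entire bundle. Here I would invoke the basic fact of the stratification theory of matrix pencils (see \cite{Edelman99}, \cite{Pervouchine04}, \cite{Teran23}, and \cite{Dmytryshyn14} for the skew-symmetric case) that the closure of a bundle is a union of bundles. I expect this to be the main obstacle — or rather the only real point. If one wanted a self-contained argument, the $\operatorname{GL}(n) \times \operatorname{GL}(m)$-invariance of $\overline{\mathcal{B}}$ is immediate, since it is the closure of a set invariant under a continuous group action; the delicate part is invariance under moving the (distinct) eigenvalues, where one would deform the eigenvalue configuration along a path of pairwise distinct values and verify that the pencil remains in $\overline{\mathcal{B}}$ throughout. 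This is precisely what the cited stratification results supply, and granting it, $L_{x',a'} \in \overline{\mathcal{B}}$ forces $\mathcal{B}(L_{x',a'}) \subset \overline{\mathcal{B}}$, as claimed.
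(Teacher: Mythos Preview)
Your argument is correct. Note, however, that the paper does not actually give a proof of this theorem: it introduces it as ``the following obvious statement'' and leaves it at that. Your write-up therefore supplies more than the paper does. The density step and the continuity step are exactly the ``obvious'' content the author has in mind; your third step --- that $\overline{\mathcal{B}}$ is a union of bundles, so $L_{x',a'}\in\overline{\mathcal{B}}$ forces $\mathcal{B}(L_{x',a'})\subset\overline{\mathcal{B}}$ --- is the one genuine input, and it is precisely what the paper later invokes via the stratification results in Section~\ref{Subs:StratBund} (in particular Theorem~\ref{T:BundleStrat} from \cite{Teran23}). So your approach coincides with the paper's implicit reasoning, just made explicit.
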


We use the standard notation $ \overline{S}$ for the closure, in the standard topology, of a set $S$. We identify the set of matrix pencils $A - \lambda B$, where $A, B \in \operatorname{Mat}_{m\times n} \left(\mathbb{C}\right)$ with $\mathbb{C}^{2mn}$, and we consider the standard topology in this set. $ \overline{\mathcal{B}}\left( L \right)$ denotes the closure of the bundle of a matrix pencil $L$.

\subsubsection{Generic pencils}

How can Theorem~\ref{T:GenericBundle} be useful? It may be hard to detemine what pencils  $\mathcal{P}_{x,a}$ (or $L_{x,a}$) are generic. The structure of Lie algebra $\mathfrak{g}$ (or a Lie algebra representation $\rho$) may impose restrictions on possible matrix pencils  $\mathcal{P}_{x,a}$ (respectively, $L_{x,a}$). For instance, \[\operatorname{corank}  \mathcal{P}_{x,a} \geq \operatorname{ind} \mathfrak{g}, \qquad \forall x, a \in \mathfrak{g}^*\] (see \cite{BolsZhang}). If the corresponding bundle $\mathcal{B}\left(\mathcal{P}_{x,a} \right)$ has maximal possible dimension, then the pair $(x,a)$ is automatically generic. 

Generic matrix bundles with fixed rank were described in \cite{Dmytryshyn16} and generic skew-symmetric matrix bundles were described in \cite{Dmytryshyn18}. Thus, the next theorem follows from Theorem~\ref{T:GenericBundle} and \cite[Theorem 3.1]{Dmytryshyn18}. 

\begin{theorem} \label{T:GenKron} Let $\mathfrak{g}$ be a complex finite-dimensional Lie algebra such that $\operatorname{ind} \mathfrak{g} > 0$. Assume that there exists a pair $(x,a) \in \mathfrak{g}^* \times \mathfrak{g}^*$ such that 

\begin{enumerate}

\item the KCF of $\mathcal{P}_{x,a} = \mathcal{A}_x - \lambda \mathcal{A}_a$ consists of Kronecker $\left( 2k_j -1 \right) \times \left( 2k_j -1 \right)$ blocks, $j=1,\dots, q= \operatorname{ind} \mathfrak{g}$;

\item and the absolute difference between sizes of any two Kronecker indices is no more than $1$, i.e.  \[\left| k_i - k_j \right| \leq 1, \qquad 1 \leq i < j \leq \operatorname{ind} \mathfrak{g}.\]

\end{enumerate}

Then the pair $(x,a)$ is generic, and the Jordan--Kronecker invariants of $\mathfrak{g}$ are the Kronecker indices \[ k_1, \dots, k_q, \qquad q =\operatorname{ind} \mathfrak{g}.\] \end{theorem}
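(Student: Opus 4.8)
The plan is to show that hypotheses (1) and (2) force the bundle of $\mathcal{P}_{x,a}$ to be as large as it can possibly be, and that this maximal bundle must be the generic one. Write $n = \dim \mathfrak{g}$ and $r = \operatorname{ind} \mathfrak{g}$, and fix a generic pair $(y,b) \in U$, so that $\mathcal{B}(\mathcal{P}_{y,b})$ is the constant bundle defining the Jordan--Kronecker invariants of $\mathfrak{g}$. First I would record that the given pencil has corank exactly $r$: by hypothesis its KCF consists of $q = r$ Kronecker blocks of odd sizes $2k_j - 1$ and no Jordan blocks, and since each Kronecker block contributes $1$ to the corank for generic $\lambda$ while Jordan blocks contribute $0$, we get $\operatorname{corank} \mathcal{P}_{x,a} = q = r$. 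Thus $\mathcal{P}_{x,a}$ is an $n \times n$ skew-symmetric pencil of corank $r$.

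Next I would invoke \cite[Theorem 3.1]{Dmytryshyn18}, which characterizes the generic bundle among $n \times n$ skew-symmetric pencils of a fixed normal rank (equivalently, of a fixed corank): it is the unique bundle whose KCF consists solely of Kronecker blocks with indices as equal as possible, i.e.\ differing pairwise by at most $1$, and with no Jordan blocks. Conditions (1) and (2) are precisely this description in the case of corank $r$. Consequently $\mathcal{B}(\mathcal{P}_{x,a})$ is the dense bundle inside the variety $X_r$ of all $n \times n$ skew-symmetric pencils of corank $\geq r$, and therefore $\overline{\mathcal{B}}(\mathcal{P}_{x,a}) = X_r$.

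It then remains to compare closures. Since $\operatorname{corank} \mathcal{P}_{z,c} \geq \operatorname{ind} \mathfrak{g} = r$ for every pair $(z,c)$ by \cite{BolsZhang}, the generic pencil $\mathcal{P}_{y,b}$ lies in $X_r = \overline{\mathcal{B}}(\mathcal{P}_{x,a})$; as $X_r$ is closed and invariant under strict equivalence, this yields $\overline{\mathcal{B}}(\mathcal{P}_{y,b}) \subseteq \overline{\mathcal{B}}(\mathcal{P}_{x,a})$. In the opposite direction, applying Theorem~\ref{T:GenericBundle} to the (linear, hence continuous) map $(z,c) \mapsto \mathcal{P}_{z,c}$ at the point $(x,a)$ gives $\mathcal{B}(\mathcal{P}_{x,a}) \subseteq \overline{\mathcal{B}}(\mathcal{P}_{y,b})$, and passing to closures gives $\overline{\mathcal{B}}(\mathcal{P}_{x,a}) \subseteq \overline{\mathcal{B}}(\mathcal{P}_{y,b})$. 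Hence the two closures coincide. Because each bundle is the unique open dense stratum of its own closure in the bundle stratification, equal closures force $\mathcal{B}(\mathcal{P}_{x,a}) = \mathcal{B}(\mathcal{P}_{y,b})$; so $(x,a)$ is generic and the Jordan--Kronecker invariants of $\mathfrak{g}$ are $k_1, \dots, k_q$.

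The main obstacle I anticipate is the precise invocation of \cite[Theorem 3.1]{Dmytryshyn18}: I must verify that conditions (1) and (2) match its genericity criterion exactly and, most importantly, that its notion of ``generic bundle'' really means the closure fills the whole fixed-corank variety $X_r$, rather than merely attaining the maximal dimension inside some component. The only other delicate point is the final implication ``equal closures $\Rightarrow$ equal bundles,'' which rests on the stratification theory for bundles of skew-symmetric pencils (that a bundle is the dense stratum of its closure); both facts are standard but need to be quoted with their correct hypotheses.
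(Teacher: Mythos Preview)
Your proposal is correct and follows essentially the same approach as the paper, which simply states that the result follows from Theorem~\ref{T:GenericBundle} together with \cite[Theorem~3.1]{Dmytryshyn18}; you have just spelled out the two-sided closure inclusion more explicitly. Your self-identified concerns are on point but harmless: \cite{Dmytryshyn18} indeed shows that the fixed-corank locus of skew-symmetric pencils is irreducible with the ``balanced Kronecker'' bundle as its unique dense stratum, and the implication ``equal bundle closures $\Rightarrow$ equal bundles'' is the standard fact that a bundle is open (hence dense) in its own closure.
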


\begin{remark} Note that the number of Kronecker blocks in Theorem~\ref{T:GenKron} is equal to $\operatorname{ind}{\mathfrak{g}}$. If the Lie algebra $\mathfrak{g}$ is Frobenious, that is $\operatorname{ind} \mathfrak{g} = 0$, then the "most generic" JK invariants are obviously $1\times 1$ Jordan blocks with distinct eigenvalues. \end{remark}

We prove a similar statement for JK invariants of Lie algebra representations in Section~\ref{S:JKStat} (see Theorem~\ref{T:GenLieReprKron}).

\subsection{ JK invariants for semi-direct sum \texorpdfstring{$\mathfrak{g} \ltimes_{\rho} V$}{g x V} and dual representation \texorpdfstring{$\rho^*$}{rho}}

Our second idea is:

\begin{enumerate} \setcounter{enumi}{1}

\item \textit{The JK invariants of a semi-direct sum $\mathfrak{g} \ltimes_{\rho} V$ with a commutative ideal $V$ are sometimes determined by the JK invariants of the dual representation $\rho^*$.}

\end{enumerate} 

In particular we prove the following statement. We use the notations for the JK invariants from Section~\ref{S:BasicDef}.

\begin{theorem} \label{T:JKInvSumDualRepr} Assume that the JK invariants of the dual representation $\rho^*:\mathfrak{g} \to \operatorname{gl}(V^*)$ are 

\begin{enumerate}

\item Jordan tuples  $J_{\lambda_i} \left(n_{i1}, \dots, n_{i s_i}\right)$, $i=1,\dots, d$;

\item vertical indices $v_j(\rho^*)$, $j=1,\dots, q$.

\end{enumerate}

Then the JK invariants of the Lie algebra $\mathfrak{q} = \mathfrak{g} \ltimes_{\rho} V$ are 

\begin{enumerate}

\item Jordan tuples  $J^{\operatorname{skew}}_{\lambda'_i} \left(2m_{i1}, \dots, 2m_{i t_i}\right)$, $i=1,\dots, d$, where \[m_{i1} + \dots + m_{it_i} = n_{i1} + \dots + n_{is_i}, \qquad i = 1,\dots, d. \] 

\item Kronecker indices \[ k_j = v_j(\rho^*), \qquad j=1,\dots, q.\]

\end{enumerate}

\end{theorem}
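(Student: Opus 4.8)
The plan is to pass to the Lie--Poisson pencil on $\mathfrak q^\ast=\mathfrak g^\ast\oplus V^\ast$ and exhibit it in block form governed by the representation pencil of $\rho^\ast$. For $x=(\alpha,p)$ and $a=(\beta,q)$ in $\mathfrak q^\ast$, the bracket $[\xi+v,\eta+w]=[\xi,\eta]_{\mathfrak g}+\rho(\xi)w-\rho(\eta)v$ gives, after a direct computation,
\[
\mathcal P_{x,a}=\mathcal A_x-\lambda\mathcal A_a=
\begin{pmatrix}
\mathcal A^{\mathfrak g}_\alpha-\lambda\mathcal A^{\mathfrak g}_\beta & B(\lambda)\\[2pt]
-B(\lambda)^{\top} & 0
\end{pmatrix},
\]
where the $(\mathfrak g,V)$-block has entries $\langle p,\rho(\xi)w\rangle=-\langle R^\ast_p(\xi),w\rangle$, so that $B(\lambda)$ is, up to sign and transposition, the pencil $R^\ast_p-\lambda R^\ast_q\colon\mathfrak g\to V^\ast$ of the dual representation. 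Since a generic pair $(x,a)$ has generic $V^\ast$-components $(p,q)$, the KCF of $B(\lambda)$ is precisely the list of JK invariants of $\rho^\ast$; by hypothesis it consists only of Jordan blocks (eigenvalues possibly including $\infty$) and vertical Kronecker blocks.

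First I would reduce $B(\lambda)$ to Kronecker canonical form. Conjugating by $\operatorname{diag}(P,Q)$ realizes the strict equivalence $B\mapsto P^{\top}BQ$ on the off-diagonal block while the top-left block becomes $P^{\top}(\mathcal A^{\mathfrak g}_\alpha-\lambda\mathcal A^{\mathfrak g}_\beta)P$, i.e.\ again some skew pencil $A(\lambda)$ on $\mathfrak g$. Thus I may assume $B(\lambda)$ is block-diagonal in KCF, using up $Q$ freely and $P$ on both blocks. The congruences that preserve this normal form and still act on $A$ are the lower-unipotent moves $\begin{pmatrix}I&0\\ Y&I\end{pmatrix}$, under which $A\mapsto A+BY-(BY)^{\top}$ for constant $Y$, together with the block-diagonal stabilizer of the KCF of $B$. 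Understanding the residual coupling of $A$ to the (now canonical) block $B$ is what remains.

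Next I would treat the two block types separately. For a regular $n\times n$ block of $B(\lambda)$ carrying a single eigenvalue $\lambda_0$, the matrix $B(\lambda)$ is invertible away from $\lambda_0$, and the block determinant formula gives $\det\mathcal P=\det B(\lambda)^2=(\lambda_0-\lambda)^{2n}$; hence the eigenvalue is inherited unchanged ($\lambda'_i=\lambda_i$) and its algebraic multiplicity doubles. Invertibility of $B$ lets me use the available moves $A\mapsto A+BY-(BY)^{\top}$ to normalize $A$, and the Jordan--Kronecker classification of skew pencils then forces this summand of $\mathcal P$ to be a sum of \emph{even}-sized skew Jordan blocks $J^{\operatorname{skew}}_{\lambda_0}(2m_{i1},\dots)$ with $\sum_j m_{ij}=\sum_j n_{ij}$, which is conclusion~(1). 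For a vertical Kronecker block of $B(\lambda)$, the corresponding rows of $\mathfrak g$ pair, through the zero $(V,V)$-block, into an odd-sized skew Kronecker block of $\mathfrak q$ whose index equals $v_j(\rho^\ast)$, via a direct matching with the skew Jordan--Kronecker theorem; since there are exactly $q$ of them this gives conclusion~(2), and in particular $\operatorname{ind}\mathfrak q=q$.

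The main obstacle is controlling the top-left block $A(\lambda)$ once $B(\lambda)$ is fixed, and this is exactly where the hypothesis that $\rho^\ast$ has no horizontal indices enters: it translates into a full-rank condition on $B(\lambda)$ (one of its generic kernels vanishes). This guarantees that $A$ cannot manufacture singular blocks — every singular block of $\mathcal P$ must arise from a surviving minimal index of $B$, i.e.\ from a vertical index — and, crucially, that the \emph{own} Kronecker blocks of the $\mathfrak g$-pencil $A$ are absorbed by the $V^\ast$-directions rather than appearing in $\mathfrak q$. The delicate remaining point is purely about the Jordan part: one must show that the residual $A$, acting only through $A\mapsto A+BY-(BY)^{\top}$, can at most redistribute the doubled multiplicity $2n$ of each eigenvalue among even-sized skew blocks, never changing the eigenvalue or the total, so that only the constraint $\sum m=\sum n$ survives. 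Making this redistribution precise, block by block via the skew Jordan--Kronecker theorem, is the technical heart of the argument; the Kronecker part, by contrast, is essentially forced once the no-horizontal-index hypothesis removes any interference from $A$.
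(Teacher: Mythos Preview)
Your setup (block form of $\mathcal P$, KCF of the off-diagonal block $B(\lambda)$, characteristic polynomial for the Jordan multiplicities) is exactly the paper's, and your reading of the hypothesis---``no horizontal indices for $\rho^*$'' $\Leftrightarrow$ $B(\lambda)$ is injective for generic $\lambda$---is the right one. The divergence is in the technical mechanism. The paper does \emph{not} try to treat the KCF blocks of $B$ one at a time via congruence moves $A\mapsto A+B^{\top}Y-(B^{\top}Y)^{\top}$; instead it proves a single structural lemma (Lemma~\ref{L:CalcJKCoreMantle}) about skew pencils of the shape
\[
\begin{pmatrix} *&*&C(\lambda)\\ *&D(\lambda)&0\\ -C(\lambda)^{\top}&0&0\end{pmatrix},
\]
with $C(\lambda)^{\top}$ injective for all $\lambda$ and $D(\lambda)$ generically nondegenerate. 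The Kronecker conclusion is obtained globally by computing the full polynomial kernel of $\mathcal P$ (Proposition~\ref{L:ChainsRestrCor}): injectivity of $C(\lambda)^{\top}$ and of $D(\lambda)$ forces every solution to live in the $y$-block, so the Kronecker indices of $\mathcal P$ equal those of $\bigl(\begin{smallmatrix}0&C\\-C^{\top}&0\end{smallmatrix}\bigr)$, hence the vertical indices of $\rho^*$. The Jordan conclusion then drops out because one shows that the core $K$ equals the $y$-block and the mantle $M$ equals the $s\oplus y$-block, so the Jordan part of $\mathcal P$ is literally the restriction $Y=\bigl(\begin{smallmatrix}*&M_{\mathrm{Jord}}\\-M_{\mathrm{Jord}}^{\top}&0\end{smallmatrix}\bigr)$, whose Pfaffian gives the multiplicities exactly as you say.

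Your block-by-block route has a real gap: once $B$ is in KCF, the top-left block $A(\lambda)$ still has off-diagonal pieces $A_{ij}$ coupling the $\mathfrak g$-directions belonging to \emph{different} KCF blocks of $B$, and your ``separate treatment'' of a single Jordan or vertical block tacitly assumes those vanish. Killing them with constant $Y$ under $A\mapsto A+B^{\top}Y-(B^{\top}Y)^{\top}$ is a nontrivial solvability problem (degree-one target, degree-one moves, mixed block types), and you do not carry it out. The paper sidesteps this entirely: the core/mantle computation in Lemma~\ref{L:CalcJKCoreMantle} is insensitive to the $*$-blocks, so no normalization of $A$ is needed. If you want to salvage your approach, the cleanest fix is to replace the congruence moves by the polynomial-kernel/core-mantle argument, which is precisely the content of that lemma.
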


 We prove this theorem in Section~\ref{SubS:ProofThDual}. See Section~\ref{S:ExDual} for a compelling illustration of this theorem's application. There we compare the Jordan-Kronecker invariants from Tables~\ref{table:1} and \ref{table:2}, corresponding to the sums of standard representation of classical matrix Lie algebras. We can easily see the following.
 
 \begin{itemize}
 
 \item The right columns are "the same" (all the Kronecker indices  coincide). 
 
 \item The indices in the middle columns are "so-so".
 
 \item And the indices in the left column are entirely different. 
 \end{itemize}

 This specific example served as the primary motivation for Theorem~\ref{T:JKInvSumDualRepr}.

\textbf{Conventions.} All Lie algebras and representations are finite-dimensional.  For simplicity sake we assume that underlying
field is $\mathbb{C}$, although all statements remain true for an arbitrary algebraically closed field $\mathbb{K}$  with $\operatorname{char} \mathbb{K} = 0$. We denote $\bar{\mathbb{C}} = \mathbb{C} \cup \left\{ \infty \right\}$. For short, we use JK as an abbreviation of Jordan-Kronecker and KCF as an abbreviation for Kronecker Canonical Form.

\par\medskip

\textbf{Acknowledgements.} The author would like to thank A.\,V.~Bolsinov and A.\,M.~Izosimov for useful comments.

\section{Basic definitions} \label{S:BasicDef}

In this section we recall the definition of the JK invariants and their basic properties. 

\subsection{Jordan-Kronecker invariants of representations}
\label{S:JK_Operator}

This section is mostly a retelling of \cite{BolsIzosKozl19}. We leave it for the sake of completeness.

\subsubsection{Kronecker Canonical Form of a pair of linear maps}
\label{SubS:JK_Operator}

In this section, we recall the Kronecker canonical form (KCF) for a pair of linear maps.  We first state the theorem in the matrix form, and then discuss the corresponding invariants.

\begin{theorem}[Kronecker Canonical Form \cite{Gantmaher88}]
\label{T:JK_operator} Any complex matrix pencil $A + \lambda B$ is strictly equivalent to one of the following matrix pencils:
\begin{equation}
\label{Eq:JK_Operator} P\left(A + \lambda B\right)Q  =
\left(\begin{matrix}
0_{m, n} &     &        &      \\
    & \!\!\! A_1 + \lambda B_1 &        &      \\
    &     & \!\!\!\! \ddots &      \\
    &     &        & \!\!\!  A_k + \lambda B_k  \\
\end{matrix}\right),
\end{equation} where $0_{m, n}$ is the zero $m \times n$-matrix, and each pair of the corresponding blocks   $A_i$ and $B_i$ takes one of the following forms:

1. Jordan block with eigenvalue $\lambda_0 \in \mathbb{C}$
 \[
 A_i =\left( \begin{matrix}
   \lambda_0 &1&        & \\
      & \lambda_0 & \ddots &     \\
      &        & \ddots & 1  \\
      &        &        & \lambda_0   \\
    \end{matrix} \right),
\quad  B_i=  \left( \begin{matrix}
    1 & &        & \\
      & 1 &  &     \\
      &        & \ddots &   \\
      &        &        & 1   \\
    \end{matrix} \right).
\]

2. Jordan block with eigenvalue $\infty$
\[
A_i = \left( \begin{matrix}
   1 & &        & \\
      &1 &  &     \\
      &        & \ddots &   \\
      &        &        & 1   \\
    \end{matrix}  \right),
\quad B_i = \left( \begin{matrix}
    0 & 1&        & \\
      & 0 & \ddots &     \\
      &        & \ddots & 1  \\
      &        &        & 0   \\
    \end{matrix}  \right).
 \]

  3. Horizontal Kronecker block 
\[
A_i= \left(
 \begin{matrix}
    0 & 1      &        &     \\
      & \ddots & \ddots &     \\
      &        &   0    & 1  \\
    \end{matrix}  \right),\quad
B_i = \left(\begin{matrix}
   1 & 0      &        &     \\
      & \ddots & \ddots &     \\
      &        & 1    &  0  \\
    \end{matrix}  \right)
\]

   4. Vertical Kronecker block
\[
A_i= \left(
  \begin{matrix}
  0  &        &    \\
  1   & \ddots &    \\
      & \ddots & 0 \\
      &        & 1  \\
  \end{matrix}
 \right), \quad
B_i = \left(
  \begin{matrix}
  1  &        &    \\
  0   & \ddots &    \\
      & \ddots & 1 \\
      &        & 0  \\
  \end{matrix}
 \right).
\]

 The number and types of blocks in decomposition  \eqref{Eq:JK_Operator} are uniquely defined up to permutation.
 \end{theorem}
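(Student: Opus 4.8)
The plan is to disentangle the two genuinely different phenomena that the canonical form packages together: the \emph{regular} part, which carries all eigenvalue data (the Jordan blocks of items~1 and~2), and the \emph{singular} part, which carries the minimal-index data (the zero block and the Kronecker blocks of items~3 and~4). Call $A+\lambda B$ regular if $m=n$ and $\det(A+\lambda B)\not\equiv 0$, and singular otherwise. I would first prove the statement for regular pencils, then peel off the singular part by induction on $m+n$, and finally argue uniqueness separately. It is convenient throughout to homogenize, writing $\mu A+\lambda B$ and treating every $\lambda_0\in\C$ and $\infty$ on equal footing as points of $\CP^1$; this makes the symmetry between items~1 and~2 (and between items~3 and~4) manifest, and lets me dispose of the eigenvalue $\infty$ simply by swapping the roles of $A$ and $B$.

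\textbf{Regular case.} Here there is no singular part and I would recover the Weierstrass form. Since $\det(A+\lambda B)$ is a nonzero polynomial of degree $\le n$, I can choose $\lambda_0\in\C$ with $A+\lambda_0 B$ invertible. Taking $P=(A+\lambda_0 B)^{-1}$, $Q=I$, and setting $\mu=\lambda-\lambda_0$, $M=(A+\lambda_0 B)^{-1}B$, the pencil becomes strictly equivalent to $I+\mu M$. A similarity $Q'^{-1}MQ'$, realized as a strict equivalence with $P'=Q'^{-1}$ so that the identity summand is preserved, puts $M$ into Jordan form. The Jordan blocks of $M$ with nonzero eigenvalue then yield the finite Jordan blocks of item~1 (after undoing the shift $\mu\mapsto\lambda$), while the nilpotent blocks of $M$, i.e.\ eigenvalue $0$ corresponding to $\lambda=\infty$, yield the $\infty$-blocks of item~2. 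I would confirm the two normal forms agree by a direct computation on a single block.

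\textbf{Singular case.} If the pencil is singular there is a nonzero polynomial vector in its right kernel; let $x(\lambda)=x_0+\lambda x_1+\dots+\lambda^{\varepsilon}x_{\varepsilon}$ be one of minimal degree $\varepsilon$, so that $Ax_0=0$, $Ax_{j}+Bx_{j-1}=0$ for $1\le j\le\varepsilon$, and $Bx_{\varepsilon}=0$. The heart of the matter is a \emph{reduction lemma}: the $\varepsilon+1$ domain vectors $x_0,\dots,x_\varepsilon$ and the $\varepsilon$ codomain vectors $Bx_0,\dots,Bx_{\varepsilon-1}$ are linearly independent and can be completed to bases in which the pencil acquires the block form $\left(\begin{smallmatrix} L_\varepsilon & 0\\ 0 & \hat A+\lambda\hat B\end{smallmatrix}\right)$, where $L_\varepsilon$ is exactly the $\varepsilon\times(\varepsilon+1)$ horizontal Kronecker block of item~3 and $\hat A+\lambda\hat B$ is a strictly smaller pencil. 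Applying the transpose version of this lemma splits off the vertical blocks of item~4, and minimal indices equal to $0$ (a vector in $\Ker A\cap\Ker B$, respectively a row annihilating both) produce the zero columns and rows assembling into the block $0_{m,n}$. Iterating lowers $m+n$, so the induction terminates with a regular remainder handled by the previous step.

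\textbf{Uniqueness and the main obstacle.} To see that the multiset of blocks is a strict-equivalence invariant, I would note that the elementary divisors governing the Jordan blocks are the invariant factors of the pencil regarded over $\C[\lambda]$ (or over the homogeneous ring, to capture $\infty$ symmetrically), which are unchanged by multiplication on the left and right by invertible constant matrices; the minimal indices are in turn recovered from the dimensions of the spaces of polynomial kernel vectors of each degree, again strict-equivalence invariants. Together these pin down the number and type of every block up to permutation. I expect the reduction lemma to be the real obstacle: one must verify simultaneously that the chosen kernel coefficients are independent (this is where \emph{minimality} of $\varepsilon$ is essential, since otherwise the degree could collapse), that the same vectors normalize $A$ and $B$ at once, and that the complementary pencil genuinely decouples rather than leaving cross terms. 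Getting all three to hold together is the delicate linear-algebra core of Kronecker's theorem; everything else is classical Jordan theory or bookkeeping.
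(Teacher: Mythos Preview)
The paper does not give its own proof of this theorem: it is stated as the classical Kronecker Canonical Form and attributed to Gantmacher \cite{Gantmaher88}, with no argument supplied. So there is nothing in the paper to compare your proposal against beyond the citation itself.

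That said, your sketch is the standard Gantmacher route and is sound as an outline: reduce the regular part to Weierstrass form via $I+\mu M$ and the Jordan form of $M$; in the singular case take a kernel polynomial $x(\lambda)$ of minimal degree, use its coefficients to split off a single horizontal $L_\varepsilon$ block, and induct (with the transpose handling vertical blocks). Your identification of the reduction lemma as the crux is exactly right---the simultaneous independence of $x_0,\dots,x_\varepsilon$ and $Bx_0,\dots,Bx_{\varepsilon-1}$, together with the vanishing of the off-diagonal cross terms after completing to bases, is precisely where minimality of $\varepsilon$ does the work, and is the part that requires a careful argument rather than a sentence. If you want to turn this into a full proof you will need to actually carry out that lemma; the rest is, as you say, bookkeeping.
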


\begin{remark} Unlike \cite{BolsIzosKozl19} (and similar to \cite{Gantmaher88}) there are no negative signs in the matrices $B_i$.  It doesn't affect much: there is a change of signs in some formulas (e.g. one should consider $\operatorname{Ker} (A- \lambda B)$ instead of $\operatorname{Ker} (A + \lambda B)$) but the JK invariants do not change. 
\end{remark}

It is convenient to regard the zero block $0_{m, n}$ in \eqref{Eq:JK_Operator} as a block-diagonal matrix that is composed of $m$ vertical Kronecker blocks of size  $1\times 0$ and $n$ horizontal Kronecker blocks of size $0\times 1$.

\begin{definition}
The \textit{horizontal indices} $\mathrm{h}_1, \dots, \mathrm{h}_p$ of the pencil $\mathcal P=\{A+\lambda B\}$ are defined to be the horizontal dimensions (widths) of horizontal Kronecker blocks: \[ A_i + \lambda B_i  = \underbrace{\left(
 \begin{matrix}
    \lambda & 1      &        &     \\
      & \ddots & \ddots &     \\
      &        &  \lambda    & 1  \\
    \end{matrix}  \right)}_{\mathrm{h}_i}.\] Similarly, the \textit{vertical indices} $\mathrm{v}_1, \dots, \mathrm{v}_q$  are the vertical dimensions (heights) of vertical blocks: \[ A_i + \lambda B_i  = \left. \left(
  \begin{matrix}
  \lambda  &        &    \\
  1   & \ddots &    \\
      & \ddots & \lambda \\
      &        & 1  \\
  \end{matrix}
 \right)\right\rbrace{\scriptstyle \mathrm{v}_i}.\] 
\end{definition}
In particular, in view of the above interpretation of the $0_{m,n}$ block, the first $m$ horizontal indices and first $n$ vertical indices are equal to $1$.

\subsubsection{Definition and notations for Jordan--Kronecker invariants} \label{SubS:JKReprDef}

Consider a finite-dimensional linear representation  $\rho: \goth g \to \gl(V)$ of a finite-dimensional Lie algebra $\goth g$.  To each point  $x\in V$, the representation $\rho$ assigns a linear operator  \begin{align*} R_x: &\goth g\to V, \\ R_x(\xi) = & \rho(\xi) x \in V. \end{align*}  Note that the matrix of $R_x$ is a $\operatorname{dim} V \times \dim \mathfrak{g}$ matrix.

\begin{definition} \label{Def:AlgTypePenc} The \textbf{algebraic type} of a pencil $L_{a,b} = R_a + \lambda R_b$, $a, b \in V$ is the following collection of discrete invariants:

\begin{itemize}

\item the sizes of Jordan blocks grouped by eigenvalues,

\item horizontal and vertical Kronecker indices.

\end{itemize}
\end{definition}

In Definiton~\ref{Def:AlgTypePenc} we distinguish horizontal and vertical indices. Note, that the algebraic type of $L_{a,b}$ is just a way to encode the bundle $\mathcal{B}\left(L_{a,b}\right)$.

\begin{definition}
\textbf{The Jordan--Kronecker invariant} of $\rho$ is the algebraic type of a pencil  $L_{a,b} = R_a + \lambda R_b$ for a generic pair $a, b \in V$.
\end{definition}

We denote the JK invariants of a Lie algebra representation $\rho$ as \begin{equation} \label{Eq:NotationJKRepr} J_{\lambda_1}(n_{11}, \dots, n_{1s_1}),   \dots,  J_{\lambda_d}(n_{d1}, \dots, n_{ds_d}), \quad \mathrm{h}_1(\rho),\dots,\mathrm{h}_p(\rho), \quad \mathrm{v}_1(\rho), \dots, \mathrm{v}_q(\rho). \end{equation}

Here  $\mathrm{h}_i(\rho)$ are the horizontal indices, and $\mathrm{v}_j(\rho)$  are the vertical indices of a generic pencil. We also call them \textbf{horizontal and vertical indices} of the representation $\rho$.   Each \textbf{Jordan tuple} $J_{\lambda_i}(n_{i1}, \dots, n_{is_i})$ represent Jordan blocks with sizes  $n_{i1} \times n_{i1}, \dots, n_{is_i} \times n_{is_i}$ and the same eigenvalue $\lambda_i$. In \eqref{Eq:NotationJKRepr} we regard $\lambda_i$ as (distinct) formal variables, since the eigenvalues $\lambda_i$ depend on $(x,a)$. We always assume that the numbers in the Jordan tuples $J_{\lambda_i}(n_{i1}, \dots, n_{is_i})$ are sorted in the descending order: \[n_{i1} \geq n_{i2} \geq \dots \geq n_{is_i}.\]

\subsection{Properties of Jordan-Kronecker invariants} \label{S:InterpretJK}

In this section we describe some properties of the Jordan-Kronecker invariants that we need below. 

\subsubsection{Kronecker blocks} 

In general, Kronecker blocks for a pencil $A +\lambda B$ correspond to the polynomial solutions of the following equations~\eqref{onceagain} and \eqref{onceagaindual}. For details see \cite{BolsIzosKozl19}.

\begin{proposition}[\cite{BolsIzosKozl19}]
\label{L:ChainsRestrCor} Consider a matrix pencil $A+ \lambda B$.
Horizontal Kronecker indices $\mathrm{h}_1, \dots, \mathrm{h}_p$ are given by $\mathrm{h}_i = r_i + 1$, where $r_1, \dots, r_p$ are the {minimal} degrees of independent solutions  $u(\lambda)=\sum_{j=0}^{l} u_j
\lambda^j$ of the equation
\begin{equation}
\label{onceagain}
(A+ \lambda B) u(\lambda)=0.
\end{equation} Similarly, vertical Kronecker indices $\mathrm{v}_1, \dots, \mathrm{v}_q$ are given by $\mathrm{v}_i = r'_i + 1$, where $r'_1, \dots, r'_q$ are the {minimal} degrees of independent solutions $u(\lambda)=\sum_{j=0}^{l} u_j
\lambda^j$  of the dual problem
\begin{equation}
\label{onceagaindual}
(A+ \lambda B)^* u(\lambda)=0.
\end{equation}
\end{proposition}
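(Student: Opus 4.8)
The plan is to reduce everything to the Kronecker canonical form and then read off the polynomial solutions block by block. First I would observe that if $P \in \operatorname{GL}(m)$ and $Q \in \operatorname{GL}(n)$ realize a strict equivalence $P(A+\lambda B)Q = \tilde A + \lambda \tilde B$, then $u(\lambda)$ solves $(A + \lambda B)u(\lambda) = 0$ if and only if $v(\lambda) = Q^{-1} u(\lambda)$ solves $(\tilde A + \lambda \tilde B) v(\lambda) = 0$. Since $Q$ is a constant invertible matrix, the substitution $u \mapsto Q^{-1} u$ preserves the degree of a polynomial vector as well as linear independence over $\mathbb{C}[\lambda]$; hence the multiset of minimal degrees $r_1, \dots, r_p$ is a strict-equivalence invariant, and by Theorem~\ref{T:JK_operator} we may assume that $A + \lambda B$ is already in KCF.

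Next I would analyze each diagonal block of \eqref{Eq:JK_Operator}, using that for a block-diagonal pencil a polynomial solution is precisely a direct sum of polynomial solutions of the individual blocks. A Jordan block (with finite or infinite eigenvalue) is square and $\det(A_i + \lambda B_i)$ is a nonzero polynomial in $\lambda$, so it admits only the trivial solution; a vertical Kronecker block has size $v_i \times (v_i - 1)$ and full column rank for generic $\lambda$, so it too has trivial polynomial kernel. The only blocks contributing nonzero solutions are the horizontal Kronecker blocks: for a block of width $h_i$ the system $(A_i + \lambda B_i)u = 0$ reads $\lambda u_k + u_{k+1} = 0$, whose solution space is spanned by the single polynomial vector $(1, -\lambda, \lambda^2, \dots, (-\lambda)^{h_i - 1})$ of degree $h_i - 1$. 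Under the convention that $0_{m,n}$ is built from $0 \times 1$ horizontal blocks, each such block contributes the constant solution of degree $0$, consistent with $h = 1$.

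It then remains to assemble these per-block solutions into the global minimal indices. Here I would invoke the theory of minimal bases: the ordered list of degrees produced by the greedy procedure — take a solution of least degree, then one of least degree independent of it, and so on — is an invariant of the $\mathbb{C}[\lambda]$-module of polynomial solutions, and for a block-diagonal pencil this module is the direct sum of the block modules, so a minimal basis is the concatenation of the minimal bases of the summands. Concretely, I would verify additivity by counting, for each $d$, the dimension over $\mathbb{C}$ of the space of polynomial solutions of degree at most $d$, which for a horizontal block of width $h_i$ equals $\max(0,\, d - h_i + 2)$; matching these counts forces $\{r_1, \dots, r_p\} = \{h_1 - 1, \dots, h_p - 1\}$, i.e. $h_i = r_i + 1$. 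The main obstacle I expect is exactly this step: one must rule out that mixing solutions coming from different blocks produces an independent solution of smaller degree than predicted, which is precisely what the minimality and invariance of a minimal basis guarantees.

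Finally, the statement for the vertical indices follows by duality. Transposing a strict equivalence $P(A+\lambda B)Q = \tilde A + \lambda \tilde B$ gives $Q^{*}(A+\lambda B)^{*}P^{*} = (\tilde A + \lambda \tilde B)^{*}$ with $P^{*}, Q^{*}$ constant and invertible, so the KCF of the adjoint pencil $(A+\lambda B)^{*} = A^{*} + \lambda B^{*}$ is obtained by transposing each block of the KCF of $A + \lambda B$. Transposition sends a horizontal block of width $h$ to a vertical block of height $h$ and vice versa, and sends each Jordan block to a Jordan block of the same size and eigenvalue; hence the horizontal indices of $(A + \lambda B)^{*}$ are exactly the vertical indices $v_1, \dots, v_q$ of $A + \lambda B$. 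Applying the first part of the argument to $(A + \lambda B)^{*}$ then yields $v_i = r'_i + 1$, completing the plan.
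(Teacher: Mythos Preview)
The paper does not give its own proof of this proposition; it is quoted from \cite{BolsIzosKozl19} and used as a black box. Your argument is correct and is essentially the standard one: reduce to KCF, note that only horizontal blocks contribute polynomial kernel vectors, and then identify the minimal degrees via the dimension count $\dim_{\mathbb{C}}\{u(\lambda): (A+\lambda B)u=0,\ \deg u\le d\}=\sum_i \max(0,\,d-h_i+2)$, which pins down the multiset $\{h_i-1\}$ and excludes any degree-lowering by mixing blocks. One small sharpening: for a vertical Kronecker block the column rank is full for \emph{every} $\lambda$, not just generic $\lambda$; you only need the generic statement, but the stronger fact is equally easy and slightly cleaner.
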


\subsubsection{Jordan blocks} 

The Jordan blocks correspond to \textit{elementary divisors} (see \cite{Gantmaher88}), which are defined using minors of a pencil $A+\lambda B$. The \textbf{rank of a pencil} $\mathcal P=\{A+\lambda B\}$ is the number \[\rk \mathcal P = \max_{\lambda\in \mathbb C} \rk (A+\lambda B).\] A value $\lambda_0 \in \mathbb{C} \cup \left\{ \infty\right\}$ is \textbf{regular} if $\operatorname{rk} (A+ \lambda_0 B) = \operatorname{rk}\mathcal{P}$. \begin{definition}
The \textbf{characteristic polynomial}  $\charp(\alpha, \beta)$ of the pencil $\mathcal P$ is defined as the greatest common divisor of all  the $r\times r$ minors of the matrix  $\alpha A + \beta B$,  where $r=\rk \mathcal P$.
\end{definition}

We can also regard the  homogeneous polynomial $\charp(\alpha, \beta)$  as  a polynomial in  $\lambda = \frac{\beta}{\alpha}$. We will use the following simple fact about Jordan blocks.

\begin{proposition}
\label{cor2} For any matrix pencil $A + \lambda B$:

\begin{enumerate}

\item There are Jordan blocks with eigenvalue $\lambda_0 \in \overline{\mathbb{C}}$ if and only if the rank of  $A-\lambda_0 B$ drops,  i.e.  $\rk (A-\lambda_0 B) < r=\rk \mathcal P$. The infinite eigenvalue appears in the case when $\rk B <r$.

\item Let $J_{\lambda_i} \left(n_{i1}, \dots, n_{is_i}\right)$ be the Jordan tuple corresponding an eigenvalue $\lambda_i \in \overline{\mathbb{C}}$. Then the multiplicity $n_{i1} + \dots + n_{i s_i}$ of the eigenvalue $\lambda_i$ coincides with the multiplicity of the corresponding root  $-\lambda_i = (\beta_i : \alpha_i)$ of the characteristic equation  $\charp(\alpha, \beta)=0$.

\end{enumerate}

\end{proposition}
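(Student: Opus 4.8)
The plan is to reduce both claims to the block-diagonal canonical form of Theorem~\ref{T:JK_operator} and then read them off block by block. First I would note that every quantity in the statement is an invariant of strict equivalence. If we replace $(A,B)$ by $(PAQ,PBQ)$ with $P,Q$ invertible, then $A-\lambda_0 B$ and $\alpha A+\beta B$ get multiplied by the fixed invertible matrices $P$ on the left and $Q$ on the right; this preserves $\rk(A-\lambda_0 B)$ for every $\lambda_0$, hence preserves $r=\rk\mathcal P$ and the locus where the rank drops, and by the Cauchy--Binet formula it preserves the ideal generated by the $r\times r$ minors of $\alpha A+\beta B$, hence the gcd $\charp(\alpha,\beta)$ up to a nonzero constant. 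The Jordan and Kronecker data are strict-equivalence invariants by definition. Therefore we may assume that $A+\lambda B$ is already in canonical form, a direct sum of the blocks listed in Theorem~\ref{T:JK_operator}.

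For part~(1) I would compute the rank of each canonical block of $A-\lambda_0 B$ and use that the rank of a block-diagonal matrix is the sum of the block ranks. A horizontal block has full row rank and a vertical block full column rank for every $\lambda_0\in\overline{\mathbb{C}}$, and the $0_{m,n}$ block has rank $0$; so the singular part contributes a constant independent of $\lambda_0$. A size-$n$ Jordan block with finite eigenvalue $\mu$ has rank $n$ when $\lambda_0\neq\mu$ and rank $n-1$ when $\lambda_0=\mu$, and the block with eigenvalue $\infty$ behaves the same way at $\lambda_0=\infty$, where one inspects $\rk B$ directly. Summing, $\rk(A-\lambda_0 B)$ attains its maximum $r$ exactly at the values that are eigenvalues of no Jordan block, and otherwise drops by precisely the number of Jordan blocks with eigenvalue $\lambda_0$. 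This yields the stated equivalence, together with the special role of $\rk B<r$ for the eigenvalue $\infty$.

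For part~(2) I would show that $\charp$ factors over the canonical blocks. Since $r=\rk\mathcal P=\sum_k r_k$, where $r_k$ is the maximal rank of the $k$-th block, any nonzero $r\times r$ minor of the block-diagonal matrix $\alpha A+\beta B$ must use exactly $r_k$ rows and $r_k$ columns inside each block (more rows than columns in one block would make it singular, and $\sum|I_k|=r=\sum r_k$ with $|I_k|\le r_k$ forces equality), and hence equals, up to sign, the product of one $r_k\times r_k$ minor from each block. As the choices in distinct blocks are independent, the identity ``gcd of products equals product of gcds'' in the UFD $\mathbb{C}[\alpha,\beta]$ gives $\charp(\alpha,\beta)=\prod_k\charp_k(\alpha,\beta)$. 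A direct computation then shows $\charp_k=1$ for every Kronecker block (their top minors already include the coprime monomials $\alpha^{r_k}$ and $\beta^{r_k}$) and for the zero block, while for a size-$n$ Jordan block the single top minor is $\det(\alpha A_k+\beta B_k)=(\alpha\mu+\beta)^n$ for finite $\mu$ and $\alpha^n$ for $\mu=\infty$. Thus $\charp$ is the product of the linear forms $(\alpha\mu+\beta)$ over the Jordan blocks, so the multiplicity of the root $(\beta_i:\alpha_i)=(-\lambda_i:1)$ equals the total size $n_{i1}+\dots+n_{is_i}$ of the Jordan blocks with eigenvalue $\lambda_i$, which is the claim.

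The only genuinely non-formal step is the factorization $\charp=\prod_k\charp_k$: one must justify that every maximal nonzero minor splits as a product of one maximal minor per block — this is where $r=\sum_k r_k$ and the rectangular shapes of the Kronecker blocks enter — and then invoke unique factorization to pass from the gcds of the individual blocks to the gcd of the products. Everything else is routine blockwise bookkeeping, so I expect this splitting argument, rather than any deep idea, to be the main point to get right.
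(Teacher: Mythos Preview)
Your argument is correct. The paper itself does not prove Proposition~\ref{cor2}; it is stated as a ``simple fact about Jordan blocks'' and the underlying product formula $\charp=\prod_k\charp_k$ is attributed to Gantmacher. Your reduction to KCF, blockwise rank computation for part~(1), and the minor-splitting justification of $\charp=\prod_k\charp_k$ for part~(2) are exactly the standard route (and indeed match what the paper sketches in its supplementary discussion), so there is nothing to correct.
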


\subsection{Jordan--Kronecker invariants of Lie algebras} \label{SubS:JKLieDef}

We briefly describe necessary facts about JK invariants for Lie algebras (for details, see \cite{BolsZhang}).

\subsubsection{Jordan--Kronecker theorem} 

First, let us recall the canonical form for a pair of skew-symmetric forms. This theorem, which we call the Jordan--Kronecker theorem, is a classical result that goes back to Weierstrass and Kronecker.  A proof of it can be found in \cite{Thompson}, which is based on \cite{Gantmaher88}.

\begin{theorem}[Jordan--Kronecker theorem]\label{T:Jordan-Kronecker_theorem}
Let $A$ and $B$ be skew-symmetric bilinear forms on a
finite-dimension vector space $V$ over a field $\mathbb{K}$ with $\textmd{char }  \mathbb{K} =0$. If the field $\mathbb{K}$
is algebraically closed, then there exists a basis of the space $V$
such that the matrices of both forms $A$ and $B$ are block-diagonal
matrices:

{\footnotesize
$$
A =
\begin{pmatrix}
A_1 &     &        &      \\
    & A_2 &        &      \\
    &     & \ddots &      \\
    &     &        & A_k  \\
\end{pmatrix}
\quad  B=
\begin{pmatrix}
B_1 &     &        &      \\
    & B_2 &        &      \\
    &     & \ddots &      \\
    &     &        & B_k  \\
\end{pmatrix}
$$
}

where each pair of corresponding blocks $A_i$ and $B_i$ is one of
the following:

\begin{itemize}

\item Jordan block with eigenvalue $\lambda_i \in \mathbb{K}$: {\scriptsize  \begin{equation} \label{Eq:JordBlockL} A_i =\left(
\begin{array}{c|c}
  0 & \begin{matrix}
   \lambda_i &1&        & \\
      & \lambda_i & \ddots &     \\
      &        & \ddots & 1  \\
      &        &        & \lambda_i   \\
    \end{matrix} \\
  \hline
  \begin{matrix}
  \minus\lambda_i  &        &   & \\
  \minus1   & \minus\lambda_i &     &\\
      & \ddots & \ddots &  \\
      &        & \minus1   & \minus\lambda_i \\
  \end{matrix} & 0
 \end{array}
 \right)
\quad  B_i= \left(
\begin{array}{c|c}
  0 & \begin{matrix}
    1 & &        & \\
      & 1 &  &     \\
      &        & \ddots &   \\
      &        &        & 1   \\
    \end{matrix} \\
  \hline
  \begin{matrix}
  \minus1  &        &   & \\
     & \minus1 &     &\\
      &  & \ddots &  \\
      &        &    & \minus1 \\
  \end{matrix} & 0
 \end{array}
 \right)
\end{equation}} \item Jordan block with eigenvalue $\infty$ {\scriptsize \begin{equation} \label{Eq:JordBlockInf}
A_i = \left(
\begin{array}{c|c}
  0 & \begin{matrix}
   1 & &        & \\
      &1 &  &     \\
      &        & \ddots &   \\
      &        &        & 1   \\
    \end{matrix} \\
  \hline
  \begin{matrix}
  \minus1  &        &   & \\
     & \minus1 &     &\\
      &  & \ddots &  \\
      &        &    & \minus1 \\
  \end{matrix} & 0
 \end{array}
 \right)
\quad B_i = \left(
\begin{array}{c|c}
  0 & \begin{matrix}
    0 & 1&        & \\
      & 0 & \ddots &     \\
      &        & \ddots & 1  \\
      &        &        & 0   \\
    \end{matrix} \\
  \hline
  \begin{matrix}
  0  &        &   & \\
  \minus1   & 0 &     &\\
      & \ddots & \ddots &  \\
      &        & \minus1   & 0 \\
  \end{matrix} & 0
 \end{array}
 \right)
 \end{equation} } \item   Kronecker block {\scriptsize \begin{equation} \label{Eq:KronBlock} A_i = \left(
\begin{array}{c|c}
  0 & \begin{matrix}
   1 & 0      &        &     \\
      & \ddots & \ddots &     \\
      &        & 1    &  0  \\
    \end{matrix} \\
  \hline
  \begin{matrix}
  \minus1  &        &    \\
  0   & \ddots &    \\
      & \ddots & \minus1 \\
      &        & 0  \\
  \end{matrix} & 0
 \end{array}
 \right) \quad  B_i= \left(
\begin{array}{c|c}
  0 & \begin{matrix}
    0 & 1      &        &     \\
      & \ddots & \ddots &     \\
      &        &   0    & 1  \\
    \end{matrix} \\
  \hline
  \begin{matrix}
  0  &        &    \\
  \minus1   & \ddots &    \\
      & \ddots & 0 \\
      &        & \minus1  \\
  \end{matrix} & 0
 \end{array}
 \right)
 \end{equation} }
 \end{itemize}

\end{theorem}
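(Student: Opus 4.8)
The plan is to deduce the theorem from the general Kronecker Canonical Form (Theorem~\ref{T:JK_operator}) together with the classical fact that two skew-symmetric pencils are congruent if and only if they are strictly equivalent (see \cite[Ch.~XII, \S 6]{Gantmaher88}). Granting that bridge, it suffices to establish two things: first, that each of the three block types listed in the statement has an explicitly computable underlying KCF; and second, that the KCF of an arbitrary skew-symmetric pencil $A + \lambda B$ is precisely a disjoint union of the KCFs coming from those skew blocks. Once both hold, I match the KCF of $A + \lambda B$ against a direct sum $S$ of the listed skew blocks having the same KCF; since $A + \lambda B$ and $S$ are both skew-symmetric and strictly equivalent, the congruence fact produces a basis realizing the claimed form.

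First I would carry out the block computations by hand. The $2n \times 2n$ skew Jordan block with eigenvalue $\lambda_i$ is $\begin{pmatrix} 0 & J + \lambda I \\ -(J+\lambda I)^\top & 0 \end{pmatrix}$ with $J$ the $n \times n$ Jordan block; its determinant is $\pm(\lambda + \lambda_i)^{2n}$ and its KCF consists of exactly two ordinary Jordan blocks of size $n$ with eigenvalue $\lambda_i$. The infinite block behaves identically after exchanging the roles of $A$ and $B$, yielding two infinite Jordan blocks of size $n$. The odd-sized skew Kronecker block has KCF equal to one horizontal plus one vertical Kronecker block of equal index. Thus every listed skew block contributes a \emph{symmetric} and \emph{paired} piece to the KCF.

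The heart of the matter is the converse structural statement, which splits into two facts about the KCF of a skew pencil. The symmetry of the minimal indices is easy: since $(A + \lambda B)^\top = -(A + \lambda B)$ and multiplication by $-1$ is a strict equivalence, the pencil is strictly equivalent to its transpose; transposition interchanges horizontal and vertical Kronecker blocks and fixes the elementary divisors, so the two multisets of minimal indices coincide and pair into skew Kronecker blocks. The genuinely nontrivial point — and the step I expect to be the main obstacle — is that the elementary divisors occur in equal-size pairs. I would reduce this to the regular part of the pencil, where $B$ may be taken nondegenerate, and consider the operator $X = B^{-1}A$, which is self-adjoint with respect to the symplectic form $B$ precisely because $A$ and $B$ are both skew. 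Passing to the $B$-orthogonal generalized eigenspaces of $X$, the pairing follows from the lemma that a single Jordan chain of a $B$-self-adjoint nilpotent $N$ spans a $B$-isotropic subspace: writing $c_k = B(v, N^k v)$ for a cyclic generator $v$, self-adjointness gives $B(N^i v, N^j v) = c_{i+j}$ while skew-symmetry gives $c_{i+j} = -c_{i+j}$, so in characteristic $0$ all $c_k$ vanish. An isotropic cyclic subspace cannot support the nondegenerate form $B$ alone, so it must be hyperbolically paired with a second $N$-invariant Jordan chain of the same size, forcing the elementary divisors to come in equal-size pairs; the infinite eigenvalue is handled symmetrically by swapping $A$ and $B$.

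Assembling the pieces, the KCF of $A + \lambda B$ has matching minimal indices and paired elementary divisors, hence it is the KCF of some direct sum $S$ of the blocks from the first step; being skew-symmetric and strictly equivalent, $A + \lambda B$ and $S$ are congruent, which is the assertion. Beyond the isotropy lemma, the only points needing care are the clean $B$-orthogonal separation of the regular and singular parts and the bookkeeping ensuring the reduction stays within congruence rather than mere strict equivalence; both are arranged by repeatedly passing to $B$-orthogonal complements, on which congruence is automatic.
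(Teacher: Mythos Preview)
The paper does not prove Theorem~\ref{T:Jordan-Kronecker_theorem}; it is quoted as a classical result with references to \cite{Thompson} and \cite{Gantmaher88}. So there is no in-paper argument to compare against, and your outline is in fact close to the standard route taken in those sources: determine the KCF, show that skew-symmetry forces the paired shape on minimal indices and elementary divisors, and then invoke the congruence $\Leftrightarrow$ strict-equivalence theorem to upgrade strict equivalence to an honest change of basis.

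Your transposition argument for the equality of horizontal and vertical indices is correct, as is the isotropy computation $B(N^i v,N^j v)=B(v,N^{i+j}v)=-B(v,N^{i+j}v)=0$ for a $B$-self-adjoint nilpotent $N$. The one place the sketch is genuinely thin is exactly the step you flag: getting from the full skew pencil to a \emph{skew} regular pencil on which some $A_\mu$ is nondegenerate. ``Passing to $B$-orthogonal complements'' does not suffice as stated, because on a singular pencil every form $A_\lambda$ is degenerate and orthogonal complements do not give direct-sum splittings. What actually works is either (i) the skew normal form of $A+\lambda B$ over the PID $\mathbb{K}[\lambda]$, which shows directly that the invariant factors occur in equal pairs; or (ii) the inductive splitting used in \cite{Thompson}: a minimal-degree polynomial solution of $(A+\lambda B)u(\lambda)=0$ has linearly independent coefficients spanning a subspace isotropic for every $A_\lambda$, and one completes it to a skew Kronecker summand that splits off under congruence, then recurses on the complement. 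Either route closes the gap and leaves the rest of your argument intact.
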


\begin{remark} Each Kronecker block is a $(2k_i-1) \times (2k_i-1)$ block, where
$k_i \in \mathbb{N}$. If $k_i=1$, then the blocks are $1\times 1$
zero matrices
\[
A_i =
\begin{pmatrix}
0
\end{pmatrix} \quad  B_i=
\begin{pmatrix}
0
\end{pmatrix}
\]
\end{remark}

\begin{remark} \label{Rem:JordSign} Note that in \cite{BolsZhang} the matrix $B_i$ for Jordan blocks is taken with the opposite sign. This is a question of convention --- whether $\lambda_i$ or $-\lambda_i$ should be called an eigenvalue. \end{remark}

We call a decomposition of $V$ into a sum of subspaces corresponding to Jordan and Kronecker blocks a \textbf{Jordan-Kronecker decomposition}:  \begin{equation} \label{Eq:JKDecomp}  V = \left( \bigoplus_{j=1}^N  V_{J_{\lambda_j, 2n_j}} \right) \oplus  \left( \bigoplus_{i=1}^q V_{K_i}\right). \end{equation} 

\subsubsection{Notations for Jordan--Kronecker invariants}

Jordan--Kronecker invariants of finite-dimensional Lie algebras were introduced in \cite{BolsZhang}. They arise as a combination of two simple facts:

\begin{itemize}

\item Let  $\mathfrak{g}$ be a finite-dimensional Lie algebra and  $\mathfrak{g}^*$ be its dual space. Any element $x\in \mathfrak{g}^*$ defines a skew-symmetric bilinear form $\mathcal{A}_x = \left( c_{ij}^k x_k\right)$, where $c_{ij}^k$ are the structural constants of $\mathfrak{g}$.  

\item There is a canonical form for a pencil of skew-symmetric bilinear forms given by the Jordan--Kronecker theorem (Theorem~\ref{T:Jordan-Kronecker_theorem}, see, e.g., \cite{Thompson, Gantmaher88}). 

\end{itemize}

The Jordan--Kronecker invariants of $\mathfrak{g}$ are determined by the KCF of the pencil $\mathcal A_{x+ \lambda a}$, given by \eqref{Eq:PairLie}, for a generic pair $(x,a)\in \mathfrak{g}^*\times \mathfrak{g}^*$. In short, there are two kinds of blocks in the Jordan--Kronecker theorem: Kronecker blocks and Jordan blocks. 

\begin{definition} The Jordan--Kronecker invariants of $\mathfrak{g}$ are 

\begin{itemize}

\item sizes of Kronecker blocks,

\item sizes of Jordan blocks grouped by eigenvalues

\end{itemize}
for a generic pencil $\mathcal{A}_{x + \lambda a}$. 
\end{definition}

The JK invariants of $\mathfrak{g}$ are a collection of Jordan tuples and Kronecker sizes: \begin{equation} \label{Eq:NotationJK} J^{\mathrm{skew}}_{\lambda_1}(2n_{11}, \dots, 2n_{1s_1}), \quad  \dots, \quad  J^{\mathrm{skew}}_{\lambda_p}(2n_{p1}, \dots, 2n_{ps_p}), \quad 2k_1 -1, \quad \dots,  \quad 2k_q - 1. \end{equation} Each \textbf{Kronecker size} $2k_j-1$ represent a  Kronecker $(2k_j-1)\times(2k_j-1)$ block  (the number $k_j$ is its \textbf{Kronecker index}).. Each \textbf{Jordan tuple} $J^{\mathrm{skew}}_{\lambda_i}(2n_{i1}, \dots, 2n_{is_i})$ represent Jordan blocks with sizes  $2n_{i1} \times 2n_{i1}, \dots, 2n_{is_i} \times 2n_{is_i}$ and the same eigenvalue $\lambda_i$. We always assume that the numbers in the Jordan tuples $J^{\mathrm{skew}}_{\lambda_i}(2n_{i1}, \dots, 2n_{is_i})$ are sorted in the descending order: \[n_{i1} \geq n_{i2} \geq \dots \geq n_{is_i}.\]  In \eqref{Eq:NotationJK} we regard $\lambda_i$ as (distinct) formal variables, since the eigenvalues $\lambda_i$ depend on $(x,a)$. Nevertheless, the number and sizes of blocks are the same for a generic pair $(x,a)\in \mathfrak{g}^*\times \mathfrak{g}^*$.

\subsubsection{Core and mantle subspaces}

In the proof of Lemma~\ref{L:CalcJKCoreMantle} we need the following definitions. 

\begin{definition} Consider a pencil of skew-symmetric forms $\left\{ A_{\lambda} = A + \lambda B\right\}$.

\begin{enumerate}

\item The \textbf{core} subspace is \[ K = \sum_{\lambda - regular} \operatorname{Ker} A_{\lambda}. \] 

\item The \textbf{mantle} subspace is the skew-orthogonal complement to the core (w.r.t. any regular form $A_{\mu}$) \[ M = K^{\perp}. \] 

\end{enumerate}

\end{definition} 

There is an easy description of the core and mantle.

\begin{proposition} \label{Cor:CoreMantle} For any JK decomposition~\eqref{Eq:JKDecomp}. 
\begin{enumerate}

\item The core subspace $K$ of $V$ is the direct sum of core subspaces of Kronecker subspaces $V_{K_i}$: \[ K = \bigoplus_{i=1}^q \left(K \cap V_{K_i}\right).\] If $e_1, \dots, e_{k_i-1},  f_1, \dots, f_{k_i}$ is a standard basis of $V_{K_i}$, then the core subspace of $V_{K_i}$ is \[K \cap V_{K_i} = \operatorname{span} \left(f_1, \dots, f_{k_i} \right).\]

\item The mantle subspace $M$ is the core plus all Jordan blocks: \[ M = K \oplus V_J. \]

\end{enumerate}

\end{proposition}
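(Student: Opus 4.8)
The plan is to exploit the block-diagonality of the pencil with respect to the given decomposition \eqref{Eq:JKDecomp} to reduce both assertions to a single Kronecker block, where everything can be computed by hand. Since $A$ and $B$ are simultaneously block-diagonal, so is $A_\lambda = A + \lambda B$ for every $\lambda$; consequently $\operatorname{Ker} A_\lambda$ is the direct sum of the kernels of the restrictions of $A_\lambda$ to the summands in \eqref{Eq:JKDecomp}. I would first record which $\lambda$ are regular. On a Jordan summand $V_{J_{\lambda_j,2n_j}}$ the restricted pencil is nondegenerate away from $\lambda=-\lambda_j$ (Proposition~\ref{cor2}), and on a Kronecker summand $V_{K_i}$ the restricted pencil has constant rank $2k_i-2$ for all $\lambda\in\overline{\mathbb C}$. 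Hence $\operatorname{rk}\mathcal P=\sum_j 2n_j+\sum_i(2k_i-2)$, the regular values form the cofinite set $\{\lambda:\lambda\neq-\lambda_j\ \forall j\}$, and for every regular $\lambda$ the Jordan summands are nondegenerate.

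For part (1), the previous paragraph shows $\operatorname{Ker} A_\lambda\cap V_J=0$ for regular $\lambda$, so the Jordan part contributes nothing to the core and $K=\bigoplus_i(K\cap V_{K_i})$. It remains to compute the core of a single Kronecker block. Reading off \eqref{Eq:KronBlock} in the standard basis $e_1,\dots,e_{k_i-1},f_1,\dots,f_{k_i}$, the form $A_\lambda|_{V_{K_i}}$ pairs $E_i=\operatorname{span}(e_a)$ with $F_i=\operatorname{span}(f_b)$ through the $(k_i-1)\times k_i$ bidiagonal matrix $M(\lambda)$ carrying $1$'s on the diagonal and $\lambda$'s on the superdiagonal. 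A vector $(u,w)$ lies in $\operatorname{Ker} A_\lambda|_{V_{K_i}}$ iff $M(\lambda)^Tu=0$ and $M(\lambda)w=0$; since $M(\lambda)^T$ is injective this forces $u=0$, while $M(\lambda)w=0$ gives the one-dimensional solution $w=\big((-\lambda)^{k_i-1},\dots,-\lambda,1\big)$. Summing these kernel lines over the infinitely many regular $\lambda$ and invoking the Vandermonde determinant shows they span all of $F_i$, i.e. $K\cap V_{K_i}=\operatorname{span}(f_1,\dots,f_{k_i})$, as claimed.

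For part (2), I would compute $M=K^\perp$ with respect to a fixed regular form $A_\mu$, again using block-diagonality: for $v=\sum_{\mathrm{block}}v_{\mathrm{block}}$ and $k\in K\cap V_{K_i}$ one has $A_\mu(v,k)=A_\mu(v_{K_i},k)$, so the orthogonality conditions decouple across summands. Because $K$ meets each Jordan summand trivially, every Jordan summand lies in $K^\perp$, giving $V_J\subseteq M$. On a Kronecker summand the condition is that $v_{K_i}=(u,w)$ be skew-orthogonal to all of $F_i$; the pairing formula gives $A_\mu(v_{K_i},(0,w'))=u^TM(\mu)w'$, which vanishes for all $w'$ iff $M(\mu)^Tu=0$, i.e. iff $u=0$. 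Hence $K^\perp\cap V_{K_i}=F_i=K\cap V_{K_i}$, and altogether $M=K\oplus V_J$.

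The reduction to blocks and the Jordan computation are routine; the one point that needs care is the Kronecker block, where I must correctly transcribe the matrices from \eqref{Eq:KronBlock}, keep track of the radical of the degenerate form $A_\mu|_{V_{K_i}}$ (which sits inside $F_i$, so that the dimension count $\dim F_i^\perp=k_i$ comes out right), and note that the injectivity of $M(\mu)^T$ holds for every regular $\mu$, so that both the core and the mantle are independent of the chosen regular value and of the particular JK decomposition.
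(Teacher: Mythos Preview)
Your proof is correct and follows exactly the approach the paper has in mind: reduce to individual blocks via the block-diagonality of the pencil, compute $\operatorname{Ker}A_\lambda$ on each block (zero on Jordan summands for regular $\lambda$, the one-dimensional line in $F_i$ on a Kronecker summand), and then read off $K$ and $K^\perp$. The paper in fact does not spell out a proof of this proposition at all---it is presented as an immediate consequence of the explicit description of $\operatorname{Ker}(A+\lambda B)$ on each block type---so your Vandermonde argument for part~(1) and your direct orthogonality computation $M(\mu)^Tu=0\Rightarrow u=0$ for part~(2) simply supply the details the paper leaves to the reader.
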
 

Simply speaking, the core subspace $K$ is spanned by the basis vectors corresponding to the down-right zero matrices of Kronecker blocks, like this one:  \[ A_i + \lambda B_i = \left(
\begin{array}{c|c}
  0 & \begin{matrix}
   1 & \lambda      &        &     \\
      & \ddots & \ddots &     \\
      &        & 1    & \lambda  \\
    \end{matrix} \\
  \hline
  \begin{matrix}
  \minus1  &        &    \\
  \minus \lambda   & \ddots &    \\
      & \ddots & \minus1 \\
      &        & \minus\lambda  \\
  \end{matrix} &\cellcolor{blue!25} 0 
 \end{array}
 \right).
 \]

\section{Jordan--Kronecker invariants and stratification of pencils} \label{S:JKStat}

In this section we show how stratification of matrix pencils under strict equivalence may help us determine JK invariants.

\begin{itemize}

\item First, we describe stratification of pencils (Theorem~\ref{T:KCFStarts}) and  bundles (Theorem~\ref{T:BundleStrat}).

\item Then, we describe generic KCF with fixed rank (Theorem~\ref{T:GenKCFPenc}). After that in Theorem~\ref{T:GenLieReprKron} we state an analogue of Theorem~\ref{T:GenKron} for JK invariants of representations.   

\end{itemize}

We emphasize that, when calculating JK invariants, stratification of pencils can be used not only for generic pencils with fixed rank, but for pencils with any other restrictions on the KCF as well (see Remark~\ref{R:OtherRest}).

\subsection{Stratification of KCF} \label{Subs:StratPenc}

First, we introduce notations for some matrix  pencils. For each $k \in \mathbb{N}$ define $k\times k$ matrices \[ J_k(\mu) = \left( \begin{matrix} \mu & 1 & & \\ & \mu & \ddots & \\ & & \ddots & 1 \\ & & & \mu \end{matrix} \right), \qquad  I_k(\mu) =  \left( \begin{matrix} 1 &  & & \\ &  \ddots & & \\ & & \ddots &  \\ & & & 1 \end{matrix} \right), \] and for each $k=0,1,\dots, $ define the $k \times (k+1)$  matrices \[F_k  = \left( \begin{matrix}
    0 & 1      &        &     \\
      & \ddots & \ddots &     \\
      &        &   0    & 1  \\
    \end{matrix}  \right),\qquad G_k = \left(\begin{matrix}
   1 & 0      &        &     \\
      & \ddots & \ddots &     \\
      &        & 1    &  0  \\
    \end{matrix}  \right).\] Next, we denote the matrix pencils for the Jordan blocks with eigenvalue $\mu$ as \[ \mathcal{E}_k(\mu) = J_k(\mu) + \lambda I_k,\qquad \mathcal{E}_k(\infty) = \lambda J_k(0) + I_k.\] The matrix pencils for the horizontal Kronecker blocks we denote as \[ \mathcal{L}_k = \lambda G_k + F_k \] Then the matrix pencils for the vertical Kronecker blocks are $\mathcal{L}^T_k$.  Now let us describe the stratification of matrix bundles.

\begin{theorem}[\cite{Boley98}] \label{T:KCFStarts} Let $\mathcal{P}_1$ and $\mathcal{P}_2$ be two matrix pencils in KCF. Then,
$\overline{O^e}\left(\mathcal{P}_1 \right) \supset O^e\left(\mathcal{P}_2 \right)$ if and only if $\mathcal{P}_1$ can be obtained from $\mathcal{P}_2$ changing canonical blocks of $\mathcal{P}_2$ by applying a sequence of rules and each rule is one of the six types below:

\begin{enumerate}

\item \label{KFC_Rule1} $\mathcal{L}_{j-1} \oplus \mathcal{L}_{k+1} \rightsquigarrow \mathcal{L}_{j} \oplus \mathcal{L}_{k}$, $1 \leq j \leq k$;

\item  \label{KFC_Rule2} $\mathcal{L}^T_{j-1} \oplus \mathcal{L}^T_{k+1} \rightsquigarrow \mathcal{L}^T_{j} \oplus \mathcal{L}^T_{k}$, $1 \leq j \leq k$;

\item \label{KFC_Rule3} $\mathcal{L}_{j} \oplus \mathcal{E}_{k+1}(\mu) \rightsquigarrow \mathcal{L}_{j+1} \oplus  \mathcal{E}_{k}(\mu)$, $j,k =0,1,2, \dots$ and $\mu \in \overline{\mathbb{C}}$;

\item \label{KFC_Rule4}  $\mathcal{L}^T_{j} \oplus \mathcal{E}_{k+1}(\mu) \rightsquigarrow \mathcal{L}^T_{j+1} \oplus  \mathcal{E}_{k}(\mu)$, $j,k =0,1,2, \dots$ and $\mu \in \overline{\mathbb{C}}$;

\item  \label{KFC_Rule5} $\mathcal{E}_{j}(\mu)\oplus \mathcal{E}_{k}(\mu) \rightsquigarrow \mathcal{E}_{j-1}(\mu)\oplus \mathcal{E}_{k+1}(\mu) $, $1 \leq j \leq k$ and $\mu \in \overline{\mathbb{C}}$;

\item \label{KFC_Rule6} $\mathcal{L}_{p}\oplus \mathcal{L}^T_{q} \rightsquigarrow \bigoplus_{i=1}^t \mathcal{E}_{k_i}(\mu_i)$, if $p+q+1 = \sum_{i=1}^t k_i$ and $\mu_i \not = \mu_{i'}$ for  $i \not = i'$, $\mu_i \in \overline{\mathbb{C}}$.

\end{enumerate}

Observe that in the rules above any block $\mathcal{E}_0(\mu)$  should be understood as the empty matrix.

\end{theorem}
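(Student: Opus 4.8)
The plan is to reduce the closure relation $\overline{O^e}(\mathcal{P}_1) \supset O^e(\mathcal{P}_2)$ to a purely numerical criterion on the strict-equivalence invariants, and then to verify that the six rules generate exactly that criterion. Concretely, I would first record the complete set of invariants of a pencil under strict equivalence: the horizontal and vertical minimal indices (the widths of $\mathcal{L}_k$, $\mathcal{L}^T_k$), and, for each $\mu \in \overline{\mathbb{C}}$, the Segre characteristic of the Jordan structure at $\mu$ (equivalently, the elementary divisors, encoded by the blocks $\mathcal{E}_k(\mu)$). It is classical --- this is Pokrzywa's criterion, equivalently the description of orbit closures for representations of the Kronecker quiver, whose indecomposables are precisely the blocks of Theorem~\ref{T:JK_operator} --- that $O^e(\mathcal{P}_2) \subset \overline{O^e}(\mathcal{P}_1)$ holds if and only if a finite list of rank functions $r_\nu$, each of the form ``$\operatorname{rk}$ of an explicit block matrix built from $A - \mu B$'', satisfies $r_\nu(\mathcal{P}_2) \le r_\nu(\mathcal{P}_1)$ for all $\nu$. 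This inequality is forced in one direction by lower semicontinuity of rank along the family of pencils (ranks can only drop in a closure), and Pokrzywa's theorem supplies the converse. This reformulation turns the statement into a comparison of two tuples of integers, and I would use it as the engine for both directions.

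For the ``if'' direction I would show that a single application of any of Rules~\eqref{KFC_Rule1}--\eqref{KFC_Rule6} produces a pencil $\mathcal{P}_1$ with $O^e(\mathcal{P}_2) \subset \overline{O^e}(\mathcal{P}_1)$, after which transitivity of closure inclusion handles an arbitrary sequence. For Rules~\eqref{KFC_Rule1} and \eqref{KFC_Rule2} this is a degeneration inside a family of horizontal (resp. vertical) Kronecker blocks, checked by the majorization of minimal indices; for Rules~\eqref{KFC_Rule3} and \eqref{KFC_Rule4} a unit of size migrates from a Jordan block into a neighbouring Kronecker block; and for Rule~\eqref{KFC_Rule5} size is transferred between two Jordan blocks sharing an eigenvalue. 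In each case the other blocks are untouched, so I would either exhibit an explicit one-parameter family $\mathcal{P}(t)$ with $\mathcal{P}(t) \in O^e(\mathcal{P}_1)$ for $t \ne 0$ and $\lim_{t\to 0}\mathcal{P}(t) \in O^e(\mathcal{P}_2)$, or verify the rank inequalities directly on the two small blocks involved. Rule~\eqref{KFC_Rule6}, where a horizontal and a vertical Kronecker block merge into Jordan blocks with pairwise distinct eigenvalues under $p+q+1 = \sum_{i} k_i$, is the one genuinely global degeneration: it is the generic regular part specialising onto a coupled pair of minimal indices, and here I would write the family explicitly and read off the elementary divisors of the limit.

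The substance lies in the ``only if'' direction, which I expect to be the main obstacle. Assuming $O^e(\mathcal{P}_2) \subset \overline{O^e}(\mathcal{P}_1)$ with $\mathcal{P}_1 \ne \mathcal{P}_2$, I must produce at least one rule that, applied to $\mathcal{P}_2$, yields $\mathcal{P}_2'$ with $O^e(\mathcal{P}_2) \subset \overline{O^e}(\mathcal{P}_2') \subset \overline{O^e}(\mathcal{P}_1)$ but strictly closer to $\mathcal{P}_1$; iterating, and using that the orbit codimension (or $\sum_\nu (r_\nu(\mathcal{P}_1) - r_\nu(\mathcal{P}_2))$) strictly decreases, then terminates the induction and assembles the required sequence. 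The hard point is completeness: one must show the six rules leave no gap in the poset, i.e. that any strict inequality in the numerical criterion always unlocks an admissible move that does not overshoot $\mathcal{P}_1$. I would argue this by a case analysis on where the first discrepancy between the invariants of $\mathcal{P}_1$ and $\mathcal{P}_2$ occurs --- in the minimal indices, within a single Jordan tuple, or in the coupling between the Kronecker part and the eigenvalue data --- matching each case to one of the rules. The most delicate situation is exactly the entanglement governed by Rules~\eqref{KFC_Rule3}, \eqref{KFC_Rule4}, and \eqref{KFC_Rule6}, where a change in the minimal indices is coupled to a change in the elementary divisors; controlling both simultaneously so that the chosen move stays below $\mathcal{P}_1$ is the crux of the proof.
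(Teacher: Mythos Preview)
The paper does not prove this theorem: it is quoted verbatim from \cite{Boley98} (and the surrounding literature, e.g.\ Pokrzywa, Edelman--Elmroth--K{\aa}gstr\"om, De~Ter\'an--Dopico), and used as a black box to derive Theorems~\ref{T:GenKCFPenc} and~\ref{T:GenLieReprKron}. So there is no ``paper's own proof'' to compare your attempt against.

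That said, your sketch is the standard route to this result and is essentially correct in outline. The reduction to Pokrzywa's rank criterion (semicontinuity of the ranks of the block-Toeplitz matrices built from $A-\mu B$) is exactly how the orbit-closure poset is characterised, and the ``only if'' direction is indeed the substantive part, handled by a cover-relation analysis showing that the six moves are precisely the minimal degenerations. Your identification of Rule~\eqref{KFC_Rule6} and the mixed Rules~\eqref{KFC_Rule3}--\eqref{KFC_Rule4} as the delicate cases is accurate. If you wanted to turn this into a self-contained proof you would need to actually carry out the case analysis on the rank inequalities, but for the purposes of this paper the citation suffices.
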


\begin{remark}
Note that the last transformation in Theorem~\ref{T:KCFStarts} (where a horizontal and vertical Kronecker blocks are replaced with Jordan blocks) increases the rank of the pencil by $1$, whereas other transformations in it preserve the rank. \end{remark}
 
 \begin{remark}  A reformulation of Theorem~\ref{T:KCFStarts} in terms of the Weyr characteristic is given in \cite[Theorem 7]{Teran23}  (see also the references therein).  \end{remark}

\subsubsection{Stratification for bundles of matrix pencils} \label{Subs:StratBund}

In this paper we mostly work with bundles $\mathcal{B}(L)$ of a pencil $L$, not with 
orbits $\mathcal{O}(L)$. The stratification of bundles is fairly straightforward, we briefly describe the result  from  \cite{Teran23}.

We denote by $\Phi$ the set of all one-to-one mappings of $\overline{\mathbb{C}}$ to itself. We also use the notation $\Psi$ for the set of all mappings from $\overline{\mathbb{C}}$ to itself (not necessarily one-to-one). Then, for $\psi \in \Psi$ we denote by $\psi(L)$ any pencil which is strictly equivalent to the pencil obtained from $\operatorname{KCF}(L)$ after replacing the Jordan blocks associated with the eigenvalue $\mu \in \overline{\mathbb{C}}$ by Jordan blocks of the same size associated with the eigenvalue $\psi(\mu) \in \overline{\mathbb{C}}$. 

For a given matrix pencil $L(\lambda) = A + \lambda B$, with $A,B \in \mathbb{C}^{m\times n}$, we denote:
\begin{gather*} \mathcal{O}(L) := \left\{  P\left(A  + \lambda B\right)Q : P \in \operatorname{GL}(m), \quad Q \in \operatorname{GL}(n) \right\}, \qquad (\text{orbit of } L(\lambda)), \\
\mathcal{B}(L) := \bigcup_{\varphi \in \Phi} \mathcal{O} \left(\varphi(L) \right) \qquad (\text{bundle of } L(\lambda)). \end{gather*} Note that $\mathcal{O} \left(\varphi(L) \right)$ is well defined, regardless of the particular pencil $\varphi(L)$, since, as mentioned above, all pencils  $\varphi(L)$ are strictly equivalent.

 \begin{theorem}[{\cite[Theorem 12]{Teran23}}] \label{T:BundleStrat} Let $L(\lambda)$ and $M(\lambda)$  be two matrix pencils of the same size. Then $\overline{\mathcal{B}} \left( M\right) \subseteq \overline{\mathcal{B}} \left( L\right)$  if
and only if $M(\lambda) \in \overline{\mathcal{O}}\left(\psi_c(L) \right)$, for some map $\psi: \overline{\mathbb{C}} \to  \overline{\mathbb{C}}$.  \end{theorem}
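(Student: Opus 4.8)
The plan is to reduce the bundle statement to the orbit-closure stratification already recorded in Theorem~\ref{T:KCFStarts}, isolating the continuous ``eigenvalue'' degrees of freedom from the discrete degeneration of the block structure. Recall that $\mathcal{B}(L)$ is swept out by letting the tuple of distinct eigenvalues $(\mu_1,\dots,\mu_r)$ appearing in $\operatorname{KCF}(L)$ vary over the configuration space of pairwise distinct points of $\overline{\mathbb{C}}$ while acting by $\operatorname{GL}(m)\times\operatorname{GL}(n)$, so that $\overline{\mathcal{B}}(L)$ is obtained by passing to limits in both factors. The first step is the auxiliary fact that for every map $\psi$ one has $\overline{\mathcal{O}}(\psi(L))\subseteq\overline{\mathcal{B}}(L)$. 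The point is that a non-injective $\psi$ is a limit of bijections: choose $\varphi_t\in\Phi$ depending continuously on a parameter $t$, injective for $t\neq 0$, with $\varphi_t\to\psi$ pointwise on the finitely many eigenvalues of $\operatorname{KCF}(L)$ as $t\to 0$. Moving eigenvalues together along such a path changes no block sizes in the limit, since coalescing $J_{k}(\mu_i)\oplus J_{k}(\mu_j)$ to $J_{k}(\mu)\oplus J_{k}(\mu)$ forces no further degeneration; hence the pencils $\varphi_t(L)\in\mathcal{B}(L)$ converge to a pencil strictly equivalent to $\psi(L)$, giving $\psi(L)\in\overline{\mathcal{B}}(L)$. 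As $\overline{\mathcal{B}}(L)$ is closed and $\operatorname{GL}(m)\times\operatorname{GL}(n)$-invariant, the whole orbit closure $\overline{\mathcal{O}}(\psi(L))$ lies inside it.

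The ``if'' direction then follows quickly. If $M\in\overline{\mathcal{O}}(\psi(L))$ for some map $\psi$, the auxiliary fact gives $M\in\overline{\mathcal{B}}(L)$. To upgrade this to $\overline{\mathcal{B}}(M)\subseteq\overline{\mathcal{B}}(L)$ I would use that $\overline{\mathcal{B}}(L)$ is a finite union of bundles which is downward closed in the closure order: this is exactly the statement that the strata of the pencil space are the bundles, so a closed union of orbit closures is automatically a union of full bundles. Consequently $M\in\overline{\mathcal{B}}(L)$ already forces $\mathcal{B}(M)\subseteq\overline{\mathcal{B}}(L)$, and taking closures yields the claim.

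For the ``only if'' direction, assume $\overline{\mathcal{B}}(M)\subseteq\overline{\mathcal{B}}(L)$; in particular $M\in\overline{\mathcal{B}}(L)$. Choose a sequence in $\mathcal{B}(L)$ converging to $M$ and record the associated eigenvalue tuples $(\mu_1^{(n)},\dots,\mu_r^{(n)})$. Passing to a subsequence, these tuples converge in the compact space $\overline{\mathbb{C}}^{\,r}$ to a limit $(\mu_1^\infty,\dots,\mu_r^\infty)$, whose possibly repeated entries define a map $\psi$ sending the $i$-th eigenvalue of $L$ to $\mu_i^\infty$. By construction the limiting canonical form along the sequence is that of $\psi(L)$ up to the residual strict-equivalence degeneration, so $M\in\overline{\mathcal{O}}(\psi(L))$, as required.

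The main obstacle is the limit analysis when eigenvalues coalesce, concentrated in the auxiliary fact and in the ``only if'' direction. One must show that, as distinct eigenvalues approach a common value along a generic path, the limiting form acquires \emph{no} degeneration beyond replacing the eigenvalues by their limits --- i.e.\ the limit really is $\psi(L)$ and nothing strictly lower in the order --- while simultaneously the group degeneration is free to reach every pencil of $\overline{\mathcal{O}}(\psi(L))$. Making this separation rigorous requires controlling how the Jordan and Kronecker minimal indices behave under simultaneous eigenvalue confluence and strict-equivalence limits, and it is precisely here that Theorem~\ref{T:KCFStarts} (the orbit stratification, in particular rule~\ref{KFC_Rule6}) must be invoked to account for the interaction between Kronecker blocks and the newly coincident Jordan blocks.
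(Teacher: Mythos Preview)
The paper itself does not prove this statement: it is quoted directly from \cite{Teran23} and used as a black box, so there is no argument in the paper to compare yours against.

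As for your sketch, the outline is reasonable but the two places you flag as ``obstacles'' are genuine gaps, not routine details. In the ``if'' direction you pass from $M\in\overline{\mathcal{B}}(L)$ to $\mathcal{B}(M)\subseteq\overline{\mathcal{B}}(L)$ by asserting that bundle closures are unions of full bundles (``the strata of the pencil space are the bundles''). That frontier condition is itself one of the principal results of \cite{Teran23}, so invoking it here is circular, or at least imports the same amount of work you are trying to avoid. In the ``only if'' direction, knowing that $N_k\in\mathcal{O}(\varphi_k(L))$ with $N_k\to M$ and $\varphi_k\to\psi$ on the finitely many eigenvalues does \emph{not} yield $M\in\overline{\mathcal{O}}(\psi(L))$: the natural candidate $P_k\,\psi(L)\,Q_k$ converges to $M$ only if the conjugating matrices $P_k,Q_k$ stay bounded, and in orbit degenerations they typically do not. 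You correctly identify this as the crux, but gesturing at Theorem~\ref{T:KCFStarts} is not a resolution; the actual proof in \cite{Teran23} handles it via an explicit combinatorial analysis of how the Segre/Weyr characteristics majorize under eigenvalue coalescence, which is substantially more than a soft limiting argument.
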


\subsubsection{Generic KFC with fixed rank} \label{Subs:GenKCF}

Using Theorem~\ref{T:KCFStarts}, it is not hard to find top-dimensional orbits of pencils with a given rank. Denote by $\mathcal{P}^{m\times n}_r$ of $m \times n$ matrix pencils with 
complex coefficients and rank at most $r$.
 
\begin{theorem}[{\cite[Theorem~3.5]{Teran08}}] \label{T:GenKCFPenc} Let $r$ be an integer with $1 \leq r \leq \min(m, n)$ if $m\not = n$ and $1 \leq r \leq n-1$ if $m = n$. Then the set $\mathcal{P}^{m\times n}_r$ is a closed set which has exactly $r+1$ irreducible components in the Zariski topology. These irreducible components are $\overline{\mathcal{O}\left( \mathcal{K}_a\right)}$, for $a =0,1,\dots, r$, where \[ \mathcal{K}_a(\lambda) :=\operatorname{diag} \left( \underbrace{\mathcal{L}_{\alpha+1},\dots, \mathcal{L}_{\alpha+1}}_{s},  \underbrace{\mathcal{L}_{\alpha},\dots, \mathcal{L}_{\alpha}}_{n-r-s}, \underbrace{\mathcal{L}^T_{\beta+1},\dots, \mathcal{L}^T_{\beta+1}}_{t},  \underbrace{\mathcal{L}^T_{\beta},\dots, \mathcal{L}^T_{\beta}}_{m-r-t}\right)\] with \[a = \alpha(n-r) + s, \qquad \text{ and } \qquad r-a = \beta(m-r)  + t\]  being the Euclidean divisions of $a$ and $r-a$ by, respectively, $n-r$ and $m-r$. \end{theorem}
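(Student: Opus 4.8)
The plan is to deduce the theorem from the stratification of pencils in Theorem~\ref{T:KCFStarts} by identifying the maximal orbit closures contained in $\mathcal{P}^{m\times n}_r$. First I would record that $\mathcal{P}^{m\times n}_r$ is Zariski closed: the normal rank of $A+\lambda B$ is at most $r$ exactly when every $(r+1)\times(r+1)$ minor of $\alpha A+\beta B$ vanishes identically as a homogeneous polynomial in $(\alpha,\beta)$, and this amounts to the vanishing of finitely many polynomials in the entries of $A$ and $B$. Because pencils of fixed size $m\times n$ fall into finitely many bundles (the KCF has finitely many block shapes), $\mathcal{P}^{m\times n}_r$ is a finite union of bundle closures, each of which is irreducible, being the closure of the image of an irreducible variety under a morphism. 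Its irreducible components are therefore the maximal members of this finite family under inclusion, and it remains to pin these down.

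Next I would fix the combinatorics of the KCF of a pencil of normal rank exactly $r$. Writing the horizontal Kronecker blocks as $\mathcal{L}_{\epsilon_1},\dots,\mathcal{L}_{\epsilon_p}$, the vertical ones as $\mathcal{L}^T_{\eta_1},\dots,\mathcal{L}^T_{\eta_q}$, and letting $J$ be the total size of the Jordan blocks, counting rows, columns and normal rank gives $q=m-r$, $p=n-r$, and $\sum_i\epsilon_i+\sum_j\eta_j=r-J$. Thus the numbers of horizontal and vertical Kronecker blocks are forced, and $\sum_i\epsilon_i+\sum_j\eta_j=r$ precisely when there are no Jordan blocks. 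I would then use Theorem~\ref{T:KCFStarts} to argue that a maximal orbit closure inside $\mathcal{P}^{m\times n}_r$ has no Jordan blocks and perfectly balanced indices: rules \ref{KFC_Rule3} and \ref{KFC_Rule4} trade any Jordan block for an increase of a Kronecker index while preserving the rank (a block of the required orientation is always available, since $p+q=m+n-2r>0$), while rules \ref{KFC_Rule1} and \ref{KFC_Rule2} balance the horizontal and the vertical indices among themselves. A configuration to which none of these rank-preserving rules applies — and to which rule \ref{KFC_Rule6} is disallowed, as it would raise the rank above $r$ — is exactly one with $n-r$ balanced horizontal and $m-r$ balanced vertical Kronecker blocks, that is, $\mathcal{K}_a$ for the value $a=\sum_i\epsilon_i$ of the horizontal Kronecker rank, the even distribution being recorded by the stated Euclidean divisions $a=\alpha(n-r)+s$ and $r-a=\beta(m-r)+t$.

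Finally I would show that the $\mathcal{K}_a$, $a=0,1,\dots,r$, are pairwise incomparable and cover $\mathcal{P}^{m\times n}_r$. For incomparability, observe that along any rule-path the rank is non-decreasing (rules \ref{KFC_Rule1}--\ref{KFC_Rule5} preserve it, rule \ref{KFC_Rule6} raises it); hence a path from $\mathcal{K}_a$ to $\mathcal{K}_{a'}$, both of rank $r$, uses only rules \ref{KFC_Rule1} and \ref{KFC_Rule2}, which keep the pencil purely Kronecker and preserve $a$, forcing $a=a'$. For the covering, every $\mathcal{P}\in\mathcal{P}^{m\times n}_r$ admits a rule-path to some $\mathcal{K}_a$: if $\rk\mathcal{P}=r$, eliminate the Jordan blocks and balance as above; if $\rk\mathcal{P}=r'<r$, first combine rule \ref{KFC_Rule6} (to raise the rank) with rules \ref{KFC_Rule3}, \ref{KFC_Rule4} (to reabsorb the created Jordan block into Kronecker indices), repeating until the rank reaches $r$, and then balance, the hypotheses $r\le\min(m,n)$ (respectively $r\le n-1$ when $m=n$) guaranteeing enough Kronecker blocks of each orientation to carry this out. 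By Theorem~\ref{T:KCFStarts} each step yields $\mathcal{O}(\mathcal{P})\subseteq\overline{\mathcal{O}(\mathcal{K}_a)}$, so $\mathcal{P}^{m\times n}_r=\bigcup_{a=0}^{r}\overline{\mathcal{O}(\mathcal{K}_a)}$; since each summand is irreducible and none lies in another, this union is irredundant and the $\overline{\mathcal{O}(\mathcal{K}_a)}$ are exactly the $r+1$ irreducible components. The main obstacle is the bookkeeping of the second and third paragraphs: verifying that ``maximal under the rank-preserving rules'' forces precisely the evenly balanced $\mathcal{K}_a$, and checking that the degenerate blocks $\mathcal{L}_0$, $\mathcal{L}^T_0$ and $\mathcal{E}_0$ behave correctly in the boundary cases where $p$ or $q$ is small, which is exactly where the hypotheses on $r$ are needed.
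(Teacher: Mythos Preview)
The paper does not give its own proof of this statement; it is quoted from \cite{Teran08} and used as a black box, preceded only by the remark ``Using Theorem~\ref{T:KCFStarts}, it is not hard to find top-dimensional orbits of pencils with a given rank.'' Your proposal carries out precisely that suggestion, so there is no competing argument in the paper to compare against.

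On the merits, your strategy is sound and the bookkeeping is correct: for a pencil of normal rank $r$ the counts $p=n-r$, $q=m-r$ of horizontal and vertical Kronecker blocks are forced; the rank-preserving rules \ref{KFC_Rule1}--\ref{KFC_Rule5} of Theorem~\ref{T:KCFStarts} let you absorb all Jordan blocks and balance the Kronecker indices, terminating exactly at the $\mathcal{K}_a$; and the incomparability follows since any rule-path between two rank-$r$ pencils without Jordan blocks can use only rules \ref{KFC_Rule1}, \ref{KFC_Rule2}, which preserve $a=\sum_i\epsilon_i$. Two small comments. First, the detour through bundles in your opening paragraph is not needed: once you establish $\mathcal{P}^{m\times n}_r=\bigcup_{a=0}^{r}\overline{\mathcal{O}(\mathcal{K}_a)}$ as a finite union of pairwise incomparable orbit closures of the connected group $\operatorname{GL}(m)\times\operatorname{GL}(n)$, irreducibility of each piece and the component count follow directly, without having to argue that arbitrary bundle closures are irreducible. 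Second, watch the boundary case $r=\min(m,n)$ with $m\ne n$: there one of $n-r$, $m-r$ vanishes, the Euclidean division in the definition of $\mathcal{K}_a$ is only sensible for a single value of $a$, and your own argument correctly yields one irreducible component (the whole space) rather than $r+1$; this is a quirk of the statement as recorded here, not of your method.
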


 \subsection{Generic Jordan--Kronecker invariants of representations}

 Theorem~\ref{T:GenKron} is an analogue of the following theorem for Lie algebras and can be proved analoguesly. 

\begin{theorem} \label{T:GenLieReprKron} Let  $\rho: \goth g \to \gl(V)$ be a Lie algera representation, \[ n = \dim \mathfrak{g}, \qquad m = \dim V.\] Denote \begin{equation} \label{Eq:RankCond} r = \dim \mathfrak{g} -  \dimSt = \dim V - \codimO.\end{equation} Assume that any of the following 2 conditions hold: \[m \not = n \qquad  \text{ or } \qquad r < \min(m, n).\] Also assume that there exists a pair $(x,a) \in \mathfrak{g}^* \times \mathfrak{g}^*$ such that the KCF of $L_{x,a} = R_x - \lambda R_a$ has the form $\mathcal{K}_d(\lambda)$ from Theorem~\ref{T:GenKCFPenc} for some $d=0,1\dots, r$. In other words, \[ \mathcal{K}_d(\lambda) :=\operatorname{diag} \left( \underbrace{\mathcal{L}_{\alpha+1},\dots, \mathcal{L}_{\alpha+1}}_{s},  \underbrace{\mathcal{L}_{\alpha},\dots, \mathcal{L}_{\alpha}}_{n-r-s}, \underbrace{\mathcal{L}^T_{\beta+1},\dots, \mathcal{L}^T_{\beta+1}}_{t},  \underbrace{\mathcal{L}^T_{\beta},\dots, \mathcal{L}^T_{\beta}}_{m-r-t}\right),\]   
where \[d = \alpha(n-r) + s, \qquad \text{ and } \qquad r-d = \beta(m-r)  + t\]  are the Euclidean divisions. Then the pair $(x,a)$ is generic, and the Jordan--Kronecker invariants of $\mathfrak{g}$ are the following:

\begin{itemize}

\item The horizontal indices \[\underbrace{\alpha+2,\dots, \alpha+2}_{s}, \qquad \underbrace{\alpha+1,\dots, \alpha+1}_{n-r-s}.\]

\item The vertical indices \[ \underbrace{\beta+2,\dots, \beta+2}_{t}, \qquad \underbrace{\beta+1,\dots, \beta+1}_{m-r-t}.\] 

\end{itemize} 

\end{theorem}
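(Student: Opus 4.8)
The plan is to mirror the proof of Theorem~\ref{T:GenKron}, replacing the skew-symmetric input of \cite{Dmytryshyn18} by the description of generic rectangular pencils of fixed rank in Theorem~\ref{T:GenKCFPenc}. The whole argument rests on one elementary observation about the operators $R_x$: since $R_x(\xi)=\rho(\xi)x$ is linear in $x$, we have $R_x-\lambda R_a=R_{x-\lambda a}$ for every $\lambda\in\mathbb{C}$. Hence for each fixed $\lambda$ the rank of $R_x-\lambda R_a$ equals $\dim\mathcal{O}_{x-\lambda a}$, the dimension of the orbit through $x-\lambda a$ (the image of $R_y$ is the tangent space to that orbit). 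As $r$ is the maximal orbit dimension by \eqref{Eq:RankCond}, this gives $\rk(R_x-\lambda R_a)\le r$ for all $\lambda$, and also at $\lambda=\infty$ via $R_a$; taking the maximum over $\lambda\in\overline{\mathbb{C}}$ yields $\rk L_{x,a}\le r$ for \emph{every} pair, while for a generic pair the point $x$ is regular and $\lambda=0$ already attains $r$. Thus every pencil $L_{x,a}$ lies in the variety $\mathcal{P}^{m\times n}_r$ of pencils of rank at most $r$, and a generic pencil has rank exactly $r$.

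Next I would invoke Theorem~\ref{T:GenKCFPenc}. The hypotheses $m\ne n$ or $r<\min(m,n)$ (together with $r\ge 1$, the analogue of $\ind\mathfrak{g}>0$; the case $r=0$ is the trivial zero pencil) are precisely those under which $\mathcal{P}^{m\times n}_r$ is closed with exactly $r+1$ irreducible components $\overline{\mathcal{O}(\mathcal{K}_a)}$, $a=0,\dots,r$. A direct count of sizes and ranks shows each $\mathcal{K}_a$ is a purely Kronecker pencil of rank exactly $r$ and size $m\times n$; in particular it has no Jordan blocks, so its bundle coincides with its orbit, $\mathcal{B}(\mathcal{K}_a)=\mathcal{O}(\mathcal{K}_a)$. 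Applying Theorem~\ref{T:GenericBundle} to the continuous map $x\mapsto R_x$ and the given pair $(x_0,a_0)$ whose KCF is $\mathcal{K}_d$ yields $\mathcal{O}(\mathcal{K}_d)=\mathcal{B}(L_{x_0,a_0})\subseteq\overline{\mathcal{B}}(L_{x,a})$ for a generic pair $(x,a)$, and after passing to closures $\overline{\mathcal{O}(\mathcal{K}_d)}\subseteq\overline{\mathcal{B}}(L_{x,a})$.

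The core of the argument is then a squeeze. On one hand, the generic bundle $\mathcal{B}(L_{x,a})$ consists of pencils of rank exactly $r$, so it sits inside the closed set $\mathcal{P}^{m\times n}_r$; being the image of the irreducible parameter space of admissible (distinct) eigenvalue assignments times $\operatorname{GL}(m)\times\operatorname{GL}(n)$, its closure $\overline{\mathcal{B}}(L_{x,a})$ is irreducible. On the other hand $\overline{\mathcal{O}(\mathcal{K}_d)}$ is an irreducible component, i.e.\ a maximal irreducible closed subset, of $\mathcal{P}^{m\times n}_r$. An irreducible set wedged between a maximal irreducible subset and the ambient variety must coincide with that component, so $\overline{\mathcal{O}(\mathcal{K}_d)}\subseteq\overline{\mathcal{B}}(L_{x,a})\subseteq\mathcal{P}^{m\times n}_r$ forces $\overline{\mathcal{B}}(L_{x,a})=\overline{\mathcal{O}(\mathcal{K}_d)}$. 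Now $\mathcal{O}(\mathcal{K}_d)$ is open in this common irreducible variety while $\mathcal{B}(L_{x,a})$ is dense in it, so the two meet; since distinct bundles are disjoint, they are equal. Therefore the generic pencil has KCF $\mathcal{K}_d$, the pair $(x_0,a_0)$ is generic, and reading the widths $k+1$ of the horizontal blocks $\mathcal{L}_k$ and the heights $k+1$ of the vertical blocks $\mathcal{L}^T_k$ off $\mathcal{K}_d$ gives the stated horizontal indices $\alpha+2$ (multiplicity $s$), $\alpha+1$ (multiplicity $n-r-s$) and vertical indices $\beta+2$ (multiplicity $t$), $\beta+1$ (multiplicity $m-r-t$).

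I expect the main obstacle to be the rank bookkeeping of the first step: pinning down that the generic pencil rank is \emph{exactly} $r$ and that this $r$ is the very integer feeding Theorem~\ref{T:GenKCFPenc}. This is where the representation-theoretic content (the identity $R_x-\lambda R_a=R_{x-\lambda a}$ together with the orbit/stabilizer interpretation of $r$ in \eqref{Eq:RankCond}) enters, and where the edge cases $r=0$ and $m=n,\ r=\min(m,n)$ must be excluded to match the hypotheses of Theorem~\ref{T:GenKCFPenc}. The topological squeeze in the third step is robust but does rely on the irreducibility of $\overline{\mathcal{B}}(L_{x,a})$, which should be argued rather than assumed; alternatively one can avoid it by observing, via the stratification of Theorem~\ref{T:KCFStarts}, that the purely Kronecker pencil $\mathcal{K}_d$ admits only the rank-preserving redistribution rules for Kronecker blocks (its orbit being top-dimensional in $\mathcal{P}^{m\times n}_r$), so no rank-$r$ pencil with a strictly larger orbit can dominate it, whence the generic pencil cannot be more generic than $\mathcal{K}_d$.
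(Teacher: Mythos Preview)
Your proof is correct and follows essentially the same route as the paper: apply Theorem~\ref{T:GenericBundle} to get the bundle inclusion, observe that both the given pencil and the generic pencil have the maximal rank $r$, and then use Theorem~\ref{T:GenKCFPenc} to conclude that $\mathcal{K}_d$ already sits at the top of the rank-$r$ stratification, forcing equality. The paper's version is terser: it simply asserts $\operatorname{rk}L_{x,a}=\operatorname{rk}L_{\hat x,\hat a}=r$ ``by \eqref{Eq:RankCond}'' and then says that $\mathcal{B}(L_{x,a})$ is ``maximal by inclusion'' among rank-$r$ bundles, whereas you spell out the rank bound via $R_x-\lambda R_a=R_{x-\lambda a}$ and replace the maximality appeal with an explicit irreducible-component squeeze; both are the same argument dressed differently.
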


\begin{proof}[Proof of Theorem~\ref{T:GenLieReprKron}] Let  $L_{\hat{x},\hat{a}}$ be a generic pencil. By Theorem~\ref{T:GenericBundle} the bundle $\mathcal{B}\left( L_{x,a} \right)$ belongs to ``a lower strata'' than the bundle $\mathcal{B}\left( L_{\hat{x},\hat{a}} \right)$. More precisely, \[ \mathcal{B}\left( L_{x,a} \right) \subset \overline{\mathcal{B}}\left( L_{\hat{x},\hat{a}} \right).\]  By \eqref{Eq:RankCond} the pencil $L_{x,a}$ has the maximal possible rank: \[ r =  \operatorname{rk} L_{x,a} = \operatorname{rk} L_{\hat{x},\hat{a}}.\] By Theorem~\ref{T:GenKCFPenc} (or by Theorems~\ref{T:KCFStarts} 
 and \ref{T:BundleStrat})  the bundle $\mathcal{B}\left( L_{x,a} \right)$ is the ``maximal by inclusion'' bundle with rank $r$, i.e.
\[ \begin{cases}  \mathcal{B}\left( L_{x,a} \right) \subset \overline{\mathcal{B}}\left( L_{\hat{x},\hat{a}} \right), \\   \operatorname{rk} L_{x,a} = \operatorname{rk} L_{\hat{x},\hat{a}} \end{cases} \qquad \Rightarrow \qquad \mathcal{B}\left( L_{x,a} \right) = \mathcal{B}\left( L_{\hat{x},\hat{a}} \right) .\] We proved that  the pair $(x, a)$ is generic. Hence, the JK invaraints of $\rho$ are equal to the algebraic type of $ L_{x,a}$. Theorem~\ref{T:GenLieReprKron} is proved. \end{proof}

\begin{remark} If $n = m =r$, then the generic pencils consist of Jordan $2\times 2$ blocks with different eigenvalues and we have a similar statement to  Theorem~\ref{T:GenLieReprKron} for them. \end{remark}

\begin{example} Consider sum $\rho$ of $m > n$ standard representations of $\operatorname{sl}(n)$. As it is shown in \cite{Kozlov23} for the pair of matrices \[X = \left(\begin{matrix} I_n & 0 \end{matrix} \right), \qquad A = \left(\begin{matrix} 0 & I_n \end{matrix} \right) \] the KCF are as in Table~\ref{table:1}. The maximal possible rank for the representation $\rho$ can be easily found as \[ \codimO = \dim \mathfrak{g} - \dimSt.\] Since the pencil $L_{X+\lambda A}$ has maximal possible rank for the representation $\rho$  and it is  a generic pencil, we can apply Theorem~\ref{T:GenLieReprKron}. We get that: 
\begin{itemize}
    \item the pair $(X, A)$ is generic,  
    \item the JK invariants of $\rho$ are as in Table~\ref{table:1}.
\end{itemize} 

\end{example}

\begin{remark} \label{R:OtherRest} In practice, when calculation JK invariants of pencils we may know various restriction on them. For instance, the sum of the sizes of all Jordan blocks for a generic pencil $L$ is equal to the degree of the fundamental semi-invariant (see e.g. \cite{BolsIzosKozl19}). Then one can try to find a generic KCF that satisfy these restrictions using Theorem~\ref{T:KCFStarts} and generalize  Theorem~\ref{T:GenLieReprKron} (or prove some similar fact) for them. \end{remark}

\begin{remark} Stratification of skew-symmetric matrix pencil orbits and bundles are described in \cite{Dmytryshyn14}. Facts from there can be used for JK invariants of Lie algebras. \end{remark}

\section{Jordan--Kronecker invariants for semi-direct sums with a commutative ideal}

In this Section we prove Theorem~\ref{T:JKInvSumDualRepr} that related JK invariants of a semi-direct sum $\mathfrak{g}\ltimes_{\rho} V$ and the dual representation $\rho^*$. But first we describe the form of the matrix pencils $\mathcal{P}$ for semi-direct sums in Lemma~\ref{L:RelJKReprLie} and prove a statement about JK invariant of such matrices in Lemma~\ref{L:CalcJKCoreMantle}. We finish with some examples of applications of Theorem~\ref{T:JKInvSumDualRepr} in Section~\ref{S:ExDual}.

\subsection{Lie--Poisson bracket for semi-direct sums}

Let $\mathfrak{g}$ be Lie algebra with structural constants $c^k_{ij}$. For any $x \in \mathfrak{g}^*$ denote \[ \mathcal{A}^{\mathfrak{g}}_x = \left( \sum_k c_{ij}^k x_k \right). \] In other words, $\mathcal{A}^{\mathfrak{g}}_x $ is the matrix of the Lie--Poisson bracket on $\mathfrak{g}^*$ at the point $x$. The corresponding bilinear form on $T_{x}^* \mathfrak{g}^* \simeq \mathfrak{g}$ is given by \begin{equation} \label{Eq:LiePoissMatRel} \langle x, [\xi, \eta] \rangle = \mathcal{A}^{\mathfrak{g}}_x \left(\xi, \eta\right), \qquad \forall \xi, \eta \in \mathfrak{g}.\end{equation}

Let  $\rho : \mathfrak{g} \to \operatorname{gl} (V)$ be a linear representation of a complex finite-dimensional Lie algebra $\mathfrak{g}$ on a finite-dimensional vector space $V$. We form a new Lie algebra $\mathfrak{q}$ as the semi-direct product  $\mathfrak{q} = \mathfrak{g} \ltimes_{\rho} V$, where $V$ is an abelian ideal. For any two elements $(\xi_i, v_i)  \in   \mathfrak{g} \ltimes_{\rho} V, i =1,2,$ the commutation relations are \[ [(\xi_1, v_1), (\xi_2, v_2)] = ([\xi_1, \xi_2], \quad \rho(\xi_1) v_2 - \rho(\xi_2) v_1 ).\] We regard \[\mathfrak{q}^* = \mathfrak{g}^* + V^*, \qquad \mathfrak{g}^* = V^0, \quad V^* = \mathfrak{g}^0,\] where $V^0$ and $\mathfrak{g}^0$ are the  annihilators of $V$ and $\mathfrak{g}$ in $\mathfrak{q}^*$ respectively. We denote elements of $\mathfrak{q}^*$  as $(x, a), x \in \mathfrak{g}^*, a \in V^*$.

The JK invariants of the dual representation $\rho^*: \mathfrak{g} \to \operatorname{gl}(V^*)$ and the JK invariants of the Lie algebra $q$ are related by the following trivial statement.

\begin{lemma} \label{L:RelJKReprLie}  Let $\mathfrak{q} =   \mathfrak{g} \ltimes_{\rho} V$.  For any $(x,a) \in \mathfrak{q}^*$ the matrix of the Lie--Poisson bracket $\mathcal{A}^{\mathfrak{q}}_{(x,a)}$ has the form \[  \mathcal{A}^{\mathfrak{q}}_{(x,a)} = \left( \begin{matrix} \mathcal{A}^g_{x} & L_a^T \\ -L_a & 0 \end{matrix} \right), \]  where 

\begin{itemize}

\item $\mathcal{A}^g_{x}$ is the the matrix of the Lie--Poisson bracket for $\mathfrak{g}$ at the point $x \in \mathfrak{g}^*$; 

\item $L_a$ is the matrix of the operator \[L_a: \mathfrak{g} \to V^*, \qquad  L_a(\xi) =- \rho^*(\xi) a.  \] 

\end{itemize}

\end{lemma}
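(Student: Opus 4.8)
The plan is to compute the matrix of the Lie--Poisson bracket $\mathcal{A}^{\mathfrak{q}}_{(x,a)}$ directly from the commutation relations of $\mathfrak{q} = \mathfrak{g}\ltimes_\rho V$ and the general formula \eqref{Eq:LiePoissMatRel}, namely $\mathcal{A}^{\mathfrak{q}}_{(x,a)}(\Xi,\mathrm{H}) = \langle (x,a), [\Xi,\mathrm{H}]\rangle$ for $\Xi, \mathrm{H} \in \mathfrak{q}$. First I would fix the block decomposition $\mathfrak{q} = \mathfrak{g}\oplus V$ and pair it against the splitting $\mathfrak{q}^* = \mathfrak{g}^* \oplus V^*$ with the identification $x\in\mathfrak{g}^* = V^0$ and $a\in V^* = \mathfrak{g}^0$ described just before the lemma. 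Writing $\Xi = (\xi_1, v_1)$ and $\mathrm{H} = (\xi_2, v_2)$, I substitute the given bracket $[(\xi_1,v_1),(\xi_2,v_2)] = ([\xi_1,\xi_2],\, \rho(\xi_1)v_2 - \rho(\xi_2)v_1)$ into the pairing and split the computation according to which of $\xi_i, v_i$ are nonzero.

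The key step is evaluating the three independent blocks. For the $\mathfrak{g}\times\mathfrak{g}$ block, taking $v_1 = v_2 = 0$ gives $\langle (x,a),([\xi_1,\xi_2],0)\rangle = \langle x,[\xi_1,\xi_2]\rangle$ (since $a\in\mathfrak{g}^0$ annihilates the $V$-component, which here is $0$ anyway), and by \eqref{Eq:LiePoissMatRel} this is exactly $\mathcal{A}^{\mathfrak{g}}_x(\xi_1,\xi_2)$, yielding the top-left block $\mathcal{A}^{\mathfrak{g}}_x$. For the $V\times V$ block, taking $\xi_1 = \xi_2 = 0$ makes the bracket vanish, so this block is $0$, matching the bottom-right corner. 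The off-diagonal $\mathfrak{g}\times V$ block is the crux: taking $\Xi = (\xi,0)$ and $\mathrm{H} = (0,v)$ gives $[(\xi,0),(0,v)] = (0, \rho(\xi)v)$, whose pairing against $(x,a)$ is $\langle a, \rho(\xi)v\rangle$ (only the $V^*$-part $a$ sees the $V$-component). I then rewrite this using the defining relation of the dual representation, $\langle \rho^*(\xi)a, v\rangle = -\langle a, \rho(\xi)v\rangle$, so that $\langle a,\rho(\xi)v\rangle = -\langle \rho^*(\xi)a, v\rangle = \langle L_a(\xi), v\rangle$, which identifies this block with the operator $L_a(\xi) = -\rho^*(\xi)a$ from $\mathfrak{g}$ to $V^*$. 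Skew-symmetry of the Poisson form then forces the $V\times\mathfrak{g}$ block to be $-L_a^T$ (or $L_a^T$ depending on the ordering convention), giving the claimed block form.

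The main obstacle, such as it is, is purely bookkeeping: keeping the sign and transpose conventions consistent across the dual-representation identity, the skew-symmetry of $\mathcal{A}^{\mathfrak{q}}$, and the ordering of rows versus columns in the block matrix. The minus sign built into the definition $L_a(\xi) = -\rho^*(\xi)a$ is chosen precisely so that the off-diagonal blocks come out as $L_a^T$ and $-L_a$; I would verify this by checking the sign on one explicit pair of basis vectors rather than trusting the formal manipulation. Since the lemma is labeled trivial and follows entirely from unwinding definitions, no further machinery is needed, and I would simply assemble the three computed blocks into the stated $2\times 2$ block matrix to conclude.
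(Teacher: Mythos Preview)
Your proposal is correct and follows essentially the same approach as the paper: both compute the bilinear form $\mathcal{A}^{\mathfrak{q}}_{(x,a)}(\Xi,\mathrm{H}) = \langle (x,a),[\Xi,\mathrm{H}]\rangle$ block by block using the commutation relations of $\mathfrak{g}\ltimes_\rho V$, with the off-diagonal block identified via the dual-representation identity $\langle a,\rho(\xi)v\rangle = \langle -\rho^*(\xi)a,v\rangle = \langle L_a(\xi),v\rangle$. The paper's proof is terser (it works out only the $\mathfrak{g}\times V$ block explicitly and remarks that the transpose appears because $L_a$ has size $\dim V\times\dim\mathfrak{g}$), but the content is identical.
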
 
 
 In other words, if we denote by $R^{\rho^*}_a$ the operator given by \eqref{Eq:LInOperRepr} for the dual representation $\rho^*$, then \[  L_a = -R^{\rho^*}_a.\]
 
\begin{proof}[Proof of Lemma~\ref{L:RelJKReprLie}]  We just use the formular for $\mathcal{A}^{\mathfrak{q}}_{(x,a)}$: \[ \langle (x,a), [\xi, \eta] \rangle = \mathcal{A}^{\mathfrak{q}}_{(x,a)} \left(\xi, \eta\right), \qquad \forall \xi, \eta \in \mathfrak{q}.\]  For instance, if $\xi \in \mathfrak{g}$ and $\eta = v \in V$, then \[\mathcal{A}^{\mathfrak{q}}_{(x,a)} \left(\xi, v\right) = \langle a, \rho(\xi) v\rangle = \langle -\rho^*(\xi) a, v\rangle.\]  Thus, the top-right block of $\mathcal{A}^{\mathfrak{q}}_{(x,a)}$ is $L_a^T$. Note that we have to transpose the matrix, since the top-right block is a $\dim\mathfrak{g} \times \dim V$ block and $L_a$ is a $\dim V \times \dim \mathfrak{g}$ matrix.  The other blocks of $\mathcal{A}^{\mathfrak{q}}_{(x,a)}$  are found in a similar way.  Lemma~\ref{L:RelJKReprLie}  is proved. \end{proof} 

\subsection{JK invariants for block upper-triangular pencils} \label{S:CalcJKCoreMantle}

We need the following fact. 

\begin{lemma} \label{L:CalcJKCoreMantle} Let $\mathcal{P} = \left\{ A + \lambda B\right\}$ be a pencil of skew-symmetric bilinear forms on a vector space $V$ such that their matrices have the form \begin{equation} \label{Eq:PencGoodKM} A = \left( \begin{matrix} A_{xx} & A_{xs} & A_{xy} \\ -A_{xs}^T & A_{ss} & 0 \\ -A_{xy}^T & 0 & 0\end{matrix} \right), \qquad B = \left( \begin{matrix} B_{xx} & B_{xs} & B_{xy} \\ -B_{xs}^T & B_{ss} & 0 \\ -B_{xy}^T & 0 & 0\end{matrix} \right).\end{equation} Consider two pencils \[ \mathcal{P}_K = \left( \begin{matrix} 0 & A_{xy} + \lambda B_{xy} \\ -(A_{xy} + \lambda B_{xy})^T & 0 \end{matrix}\right), \qquad \mathcal{P}_J = \left( \begin{matrix} A_{ss} + \lambda B_{ss} \end{matrix}\right).\] Assume that they satisfy the following $2$ conditions:

\begin{enumerate}

\item $\operatorname{Ker}(A_{xy} + \lambda B_{xy})^T = 0$ and $\operatorname{rk} (A_{xy} + \lambda B_{xy}) = \operatorname{const}$ for all $\lambda \in \mathbb{C} \cup \left\{ \infty \right\}$.

\item The matrix $A_{ss} + \lambda_0 B_{ss}$ is nondegenerate for some $\lambda_0\in \mathbb{C} \cup \left\{ \infty \right\}$.

\end{enumerate}

Then the JK decomposition of $\mathcal{P}$ is the union of the JK decompositions of $\mathcal{P}_K$ and $\mathcal{P}_J$. 

\end{lemma}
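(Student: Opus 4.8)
The plan is to show that the block structure of the pencil $\mathcal{P}$ in \eqref{Eq:PencGoodKM} forces a clean separation between its Kronecker part and its Jordan part, identifying these with the JK decompositions of $\mathcal{P}_K$ and $\mathcal{P}_J$ respectively. I would organize the argument around the core and mantle subspaces introduced just before the statement, since Condition~1 is precisely the statement that the $x$-$y$ block behaves like a family of vertical (and horizontal) Kronecker blocks, while Condition~2 guarantees $\mathcal{P}_J$ has a regular value and hence contributes only Jordan blocks.

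First I would compute the rank of the full pencil $\mathcal{P}$. Condition~1 says $A_{xy} + \lambda B_{xy}$ has full column rank for every $\lambda \in \bar{\mathbb{C}}$ (its transpose has trivial kernel), so the ``$y$'' columns contribute a fixed number of independent rows for all regular $\lambda$; Condition~2 says the middle block $A_{ss} + \lambda B_{ss}$ is generically nondegenerate. Combining these, I would argue that $\rk \mathcal{P} = \rk \mathcal{P}_K + \rk \mathcal{P}_J$ and, more importantly, that the characteristic polynomial factors as $\charp_{\mathcal{P}}(\alpha,\beta) = \charp_{\mathcal{P}_K}(\alpha,\beta)\cdot\charp_{\mathcal{P}_J}(\alpha,\beta)$, where $\charp_{\mathcal{P}_K} \equiv \const$ because $\mathcal{P}_K$ has constant rank (no Jordan blocks, by Proposition~\ref{cor2}(1)). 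By Proposition~\ref{cor2}(2) this already pins down the eigenvalues and multiplicities of the Jordan part of $\mathcal{P}$ as exactly those of $\mathcal{P}_J$.

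Next I would handle the Kronecker indices via the core and mantle. Using Proposition~\ref{Cor:CoreMantle}, I would show that the core $K$ of $\mathcal{P}$ is spanned by the ``$y$'' coordinate directions: for a regular $\lambda$, the kernel of $A_\lambda$ lives in the $y$-block because Condition~1 makes the $x$-$y$ block inject, and the zero lower-right corner of \eqref{Eq:PencGoodKM} is what produces kernel vectors. Passing to the mantle $M = K^\perp$ and the quotient $M/K$, the induced pencil should be strictly equivalent to $\mathcal{P}_J$; here Condition~2 ensures the restriction to $M/K$ is a nondegenerate (purely Jordan) pencil. This would let me conclude that the Kronecker blocks of $\mathcal{P}$ coincide with those of $\mathcal{P}_K$ and that the remaining blocks are exactly the Jordan blocks of $\mathcal{P}_J$, which is the asserted decomposition. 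To make the index bookkeeping rigorous I would invoke Proposition~\ref{L:ChainsRestrCor}, checking that a polynomial vector $u(\lambda)$ solving $(A+\lambda B)u(\lambda)=0$ must, thanks to the vanishing blocks and Conditions~1--2, have its $s$-component and one of its Kronecker-type components forced to zero, so that minimal-degree solutions of $\mathcal{P}$ are in bijection with those of $\mathcal{P}_K$.

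The main obstacle I anticipate is the cross-term coupling: the off-diagonal blocks $A_{xs}, A_{xy}, B_{xs}, B_{xy}$ (and their $B$-analogues) mean $\mathcal{P}$ is \emph{not} literally block-diagonal, so I cannot simply read off $\mathcal{P}_K \oplus \mathcal{P}_J$. The real work is to produce an explicit change of basis (a strict equivalence) that kills the $A_{xx}, B_{xx}$ and $A_{xs}, B_{xs}$ blocks modulo the core-mantle splitting — essentially a symplectic/skew Gram--Schmidt step using the invertibility supplied by Conditions~1 and~2 — after which the block-diagonal form becomes manifest and the Kronecker/Jordan counts follow from the two sub-pencils. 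I expect this reduction step to be the technical heart of the proof, with the eigenvalue and index computations above serving to confirm that no blocks are created or destroyed in the process.
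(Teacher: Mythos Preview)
Your approach is essentially the paper's: determine the Kronecker indices by showing that polynomial solutions of $(A+\lambda B)u(\lambda)=0$ coincide with those of $\mathcal{P}_K$ (via Proposition~\ref{L:ChainsRestrCor}, using that the $y$-row forces $u_x=0$ and then the $s$-row forces $u_s=0$), and determine the Jordan blocks by identifying the core and mantle and passing to $M/K$.

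Where you go astray is in anticipating an ``explicit change of basis via symplectic Gram--Schmidt'' as the technical heart. No such step is needed. Once you have shown, as you propose, that $K=U_y$ and $M=U_s\oplus U_y$ are \emph{coordinate} subspaces, the restriction of $\mathcal{P}$ to $M$ is literally the lower-right $2\times 2$ block of \eqref{Eq:PencGoodKM}, namely
\[
\begin{pmatrix} A_{ss}+\lambda B_{ss} & 0 \\ 0 & 0 \end{pmatrix},
\]
so the induced pencil on $M/K\cong U_s$ is $\mathcal{P}_J$ on the nose. The cross terms $A_{xx},A_{xs},A_{xy}$ that worry you all live in the $x$-rows and $x$-columns, which lie outside the mantle, so they vanish automatically upon restriction---there is nothing to kill. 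Your first paragraph (rank and characteristic-polynomial factorisation) is correct but is a detour: it recovers only the eigenvalue multiplicities, not the individual Jordan block sizes, which are what the $M/K$ argument delivers directly.
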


\begin{remark} In terms of JK invariants conditions from Lemma~\ref{L:CalcJKCoreMantle}  can be formulated as follows:

\begin{enumerate}

\item The pencil $A_{xy} + \lambda B_{xy}$ consists only of horizontal Kronecker blocks.

\item The Poisson pencil $P_{J}$ is a Jordan-pencil (i.e. its JK decomposition consists of Jordan blocks).

\end{enumerate}

\end{remark}

\begin{remark} Simply speaking in Lemma~\ref{L:CalcJKCoreMantle} we can calculate JK invariants of $\mathcal{P}$ as if the blocks \[A_{xx}= B_{xx} =0, \qquad \text{ and } \qquad A_{xs} = B_{xs} = 0.\]  Lemma~\ref{L:CalcJKCoreMantle}  also formally holds in the following $2$ cases:

\begin{itemize}

\item There are no $s$-coordinates, i.e. \[ \mathcal{P} = \left( \begin{matrix} A_{xx} + \lambda B_{xx} & A_{xy} + \lambda B_{xy} \\ -(A_{xy} + \lambda B_{xy})^T & 0 \end{matrix}\right)\] 

\item There are no $x$ and $y$-coordinates, i.e. \[\mathcal{P} = \left( \begin{matrix} A_{ss} + \lambda B_{ss} \end{matrix}\right).\]

\end{itemize}

\end{remark}

\begin{proof}[Proof of Lemma~\ref{L:CalcJKCoreMantle}] The proof is in several steps.

\begin{enumerate}

\item First, note that $\mathcal{P}_J$ is a Jordan pencil and hence $\operatorname{Ker} (A_{ss} + \lambda B_{ss}) = 0$ for almost all $\lambda \in  \mathbb{C} \cup \left\{ \infty \right\}$. 

\item The pencil $\mathcal{P}_K$ is Kronecker, since \[\operatorname{rk}\left( \begin{matrix} 0 & A_{xy} + \lambda B_{xy} \\ -(A_{xy} + \lambda B_{xy})^T & 0 \end{matrix}\right) = 2 \operatorname{rk} (A_{xy} + \lambda B_{xy}) = \operatorname{const}.\]

\item Recall that the Kronecker blocks for a pencil $\left\{ A + \lambda B\right\}$ are determined by the polynomical solutions of \[ \left( A + \lambda B\right) v(\lambda) =0.\] This follows from Proposition~\ref{L:ChainsRestrCor}. The polynomial solutions for the pencil $\mathcal{P}$ and $\mathcal{P}_K$ coincide, since \[ \Ker  (A_{xy} + \lambda B_{xy})^T = 0, \qquad \operatorname{Ker} (A_{ss} + \lambda B_{ss}) = 0 \] for almost all $\lambda \in  \mathbb{C} \cup \left\{ \infty \right\}$. Thus, the Kronecker sizes for $\mathcal{P}$ and $\mathcal{P}_K$ coincide. Since $\mathcal{P}_K$ is Kronecker, the JK invariants of $\mathcal{P}_K$ is the ``Kronecker part'' of the JK invariants of $\mathcal{P}$. 

\item The rows and columns of the pencil \eqref{Eq:PencGoodKM} are partitioned into $3$ groups. Consider the corresponding decompostion of $V$: \[ V = U_x \oplus U_s \oplus U_y.\] Let us prove that $U_y$ is the core $K$ and $U_s \oplus U_y$ is the mantle $M$ of the pencil $\mathcal{P}$. Since $M = K^{\perp}$, it suffices to prove $K = U_y$.

\begin{itemize}

\item On one hand,  $K \subset  U_y$. Indeed, on the previous step we proved that \[ \operatorname{Ker} (A + \lambda B) =  \Ker  (A_{xy} + \lambda B_{xy}) \subset U_y\]  for generic $\lambda \in \mathbb{C} \cup \left\{ \infty \right\}$. Thus, the cores of $\mathcal{P}$ and $\mathcal{P}_K$ is the same subspace $K \subset U_y$. 

\item On the other hand, since $\mathcal{P}_K$ is Kronecker, $K^{\perp_{\mathcal{P}_K}} = K \supset  U_y$. Here $K^{\perp_{\mathcal{P}_K}}$ is the subspace that is skew-orthogonal to $K$ in $U_x \oplus U_y$ w.r.t. any regular form in the pencil $\mathcal{P}_K$.

\end{itemize}

 We proved that \[K = U_y \qquad \text{ and } \qquad M = U_s \oplus U_y.\]

\item Since  $K=U_y$ and $M = U_s \oplus U_y$, the Jordan blocks for $\mathcal{P}$ and for $\mathcal{P}_J = \left( \begin{matrix} A_{ss} + \lambda B_{ss} \end{matrix}\right)$ coincide. Since $\mathcal{P}_J$ is a Jordan pencil, the JK invariants of $\mathcal{P}_J$ is the ``Jordan part'' of the JK invariants of $\mathcal{P}$, as required. 

\end{enumerate}

Lemma~\ref{L:CalcJKCoreMantle} is proved. \end{proof}

\subsection{Proof of Theorem~\ref{T:JKInvSumDualRepr}} \label{SubS:ProofThDual}

Consider a generic pair $(x_1, a_1), (x_2, a_2) \in \mathfrak{q}^*$. By Lemma~\ref{L:RelJKReprLie}  the pencil $\mathcal{P} = \mathcal{A}_{(x_1,a_1)} + \mathcal{A}_{(x_2, a_2)} $ has the form \[ \mathcal{P} =  \left( \begin{matrix} \mathcal{A}^g_{x_1} & L_{a_1}^T \\ -L_{a_1} & 0 \end{matrix} \right) + \lambda \left( \begin{matrix} \mathcal{A}^g_{x_2} & L_{a_2}^T \\ -L_{a_2} & 0 \end{matrix} \right). \]  By assumption the pencil $R^{\rho^*}_{a_1} + \lambda R^{\rho^*}_{a_2}$ for the dual representation $\rho^*$ has no horizontal Kronecker blocks. Thus,  the pencil \begin{equation} \label{Eq:TransPenc} L = L_{a_1}^T  + \lambda L_{a_2}^T = - \left( R^{\rho^*}_{a_1} + \lambda R^{\rho^*}_{a_2}\right)^T \end{equation} has no vertical Kronecker blocks. Thus, by rearranging the bases of $\mathfrak{g}$ and $V$ we can take it to the form \[L_{a_1}^T  + \lambda L_{a_2}^T = \left( \begin{matrix} 0 & M_{\text{hor.Kron}} \\ M_{\text{Jord}} & 0  \end{matrix} \right),\] where  $M_{\text{hor.Kron}}$ consists only of horizontal Kroncker blocks and  $M_{\text{Jord}}$ consists only of Jordan blocks. The matrix $\mathcal{P}$ takes the form \[ \mathcal{P} =  \left( \begin{matrix} * & * & 0 & M_{\text{hor.Kron}} \\ * & * & M_{\text{Jord}} & 0  \\ 0 & -M^T_{\text{Jord}} & 0 & 0  \\ -M^T_{\text{hor.Kron}}  & 0 & 0 & 0 \end{matrix} \right). \] Applying Lemma~\ref{L:CalcJKCoreMantle}, we get the following: 

\begin{itemize} 

\item The Kronecker indices for $\mathcal{P}$ coincide with the sizes for the horizontal Kronecker blocks for $M_{\text{hor.Kron}}$. By \eqref{Eq:TransPenc} those are precisely vertical indices for the dual representation $\rho^*$.

\item The Jordan blocks for $\mathcal{P}$ are the same as in the matrix \[ Y = \left( \begin{matrix}  * & M_{\text{Jord}}  \\  -M^T_{\text{Jord}} & 0 \end{matrix} \right). \]  It is easy to see that for each Jordan block with eigenvalue $\lambda_0$ in   $R^{\rho^*}_{a_1} + \lambda R^{\rho^*}_{a_2}$ we get a "skew-symmetric pair" of Jordan blocks with eigenvalue $-\lambda_0$ in the matrices $M_{\text{Jord}}$ and $-M_{\text{Jord}}^T$. The Pffafian of $Y$ is the characterstic polynomial for $M_{\text{Jord}}$. Thus, by Proposition~\ref{cor2} each 
Jordan tuple $J_{\lambda_i} \left(n_{i1}, \dots, n_{i s_i}\right)$ for the dual representation corresponds to some Jordan tuple  $J^{\operatorname{skew}}_{\lambda_i} \left(2m_{i1}, \dots, 2m_{i t_i}\right)$ in $Y$, where  \[m_{i1} + \dots + m_{it_i} = n_{i1} + \dots + n_{is_i}.\] The sum of sizes of Jordan blocks coincide, since they are determined by the multiplicities of the corresponding roots in the characteristic polynomials. The sizes of Jordan blocks can change because of the upper-left block $*$ in $Y$.
\end{itemize}

Theorem~\ref{T:JKInvSumDualRepr} is proved.

\subsection{Examples} \label{S:ExDual}

In these section we consider sums of $m$ standard representation of matrix Lie algebras
$\mathfrak{g} \subset \mathfrak{gl}(n)$. Simply speaking, we can identify the vector space $V$ with $n \times m$ matrices $\operatorname{Mat}_{n \times m}$. Then  corresponding linear pencils are given by the right multiplication of matrices: \begin{align*} R_X: \operatorname{Mat}_{n\times n} & \to \operatorname{Mat}_{n\times m} \\ Y & \to YX.\end{align*}

\begin{itemize}
    \item First, in Section~\ref{SubS:JKDual} we prove that in these cases the JK invariants for the sum of standard representions and their dual coincide. 
    
    \item Then, in Section~\ref{S:SubSStandRep} we consider JK invariants for sums of standard representation of $\operatorname{gl}(n)$, $\operatorname{sl}(n)$, $\operatorname{so}(n)$ and $\operatorname{sp}(n)$. 
\end{itemize}

\subsubsection{Jordan--Kronecker invariants for dual standard representations} \label{SubS:JKDual}

For any representation $\rho:\operatorname{g} \to \operatorname{gl}(V)$ we denote the sum of $m$ representations as \[\rho^{\oplus_m} : \operatorname{g} \to \operatorname{gl}\left(V^{\oplus_m}\right).\] First, we prove the following simple statement.

\begin{lemma} \label{L:SelfDualJK} Let $\mathfrak{g} \subset \mathfrak{gl}(n)$ be a matrix Lie algebra and $\rho: \mathfrak{g}  \to \operatorname{gl}(\mathbb{C}^n) $ be its standard representation. If for any $X \in \mathfrak{g}$ we also have $-X^T \in \mathfrak{g}$, then the sums of $m$ standard representations  $\rho^{\oplus_m}$ and its dual $\left(\rho^{\oplus_m}\right)^*$ have the same Jordan--Kronecker invariants of representations. \end{lemma}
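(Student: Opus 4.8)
The plan is to compare the two representations $\rho^{\oplus_m}$ and $\left(\rho^{\oplus_m}\right)^*$ directly through their defining pencils, showing that the pencil of one is carried to the pencil of the other by an invertible change of basis together with transposition, so that both pencils have identical KCF. The key observation is that the self-duality hypothesis ``$X \in \mathfrak{g} \Rightarrow -X^T \in \mathfrak{g}$'' provides the Lie algebra isomorphism $\theta: \mathfrak{g} \to \mathfrak{g}$, $\theta(X) = -X^T$, which realizes the abstract identification of $\rho$ with $\rho^*$ concretely on the matrix level. First I would write down the operator $R_X$ for $\rho^{\oplus_m}$ explicitly: identifying $V^{\oplus_m}$ with $\operatorname{Mat}_{n\times m}$, we have $R_X^{\rho}: \mathfrak{g} \to \operatorname{Mat}_{n\times m}$ given by $Y \mapsto YX$ (where the element of $V^{\oplus_m}$ is the matrix $X$ and $Y \in \mathfrak{g}$), as described in the preamble to this section.

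Next I would identify the dual representation's operator. For $\rho^*: \mathfrak{g} \to \operatorname{gl}(V^*)$, the action is $\rho^*(Y) = -\rho(Y)^T$, and summing over $m$ copies, the operator $R^{\rho^*}_{\xi}$ at a point $\xi \in (V^*)^{\oplus_m} \cong \operatorname{Mat}_{n\times m}$ sends $Y \in \mathfrak{g}$ to $-Y^T \xi$ up to the standard identification of $V^*$ with $\mathbb{C}^n$ via the trace pairing. The central computation is then to check that, under the substitution $Y \mapsto \theta(Y) = -Y^T$ in the source $\mathfrak{g}$ (which is a bijection of $\mathfrak{g}$ onto itself by hypothesis) together with transposition on the target, the operator $R^{\rho}_X$ becomes $R^{\rho^*}_X$ for the same point $X \in \operatorname{Mat}_{n\times m}$. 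Concretely, the pencil $R^{\rho}_{X} - \lambda R^{\rho}_{A}$ for $\rho^{\oplus_m}$ and the pencil $R^{\rho^*}_{X} - \lambda R^{\rho^*}_{A}$ for $\left(\rho^{\oplus_m}\right)^*$ differ only by precomposition with the invertible linear map $\theta$ on $\mathfrak{g}$ and by an overall transpose/sign, both of which are strict equivalences (multiplication by fixed invertible matrices $P, Q$ on left and right). Since strict equivalence preserves the KCF, and transposition interchanges horizontal and vertical Kronecker blocks while fixing Jordan block data --- here it matters that for a generic pair the relevant indices match up correctly --- the algebraic types coincide.

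The main obstacle I anticipate is bookkeeping the transpose correctly: transposition of a pencil swaps horizontal and vertical Kronecker indices and can conjugate the roles of the source and target spaces. I must verify that the map $\theta$ on $\mathfrak{g}$ absorbs this swap, so that the net effect is an honest strict equivalence of the \emph{same} pencil (in the sense of Definition~\ref{Def:AlgTypePenc}) rather than a transposed pencil with swapped invariants. The self-duality hypothesis is exactly what guarantees $\theta$ maps $\mathfrak{g}$ bijectively to $\mathfrak{g}$, so that the change of variable stays inside the representation space and does not merely produce the transposed pencil of a different representation. Once this is checked, the fact that strict equivalence is the equivalence defining the KCF (Theorem~\ref{T:JK_operator}) immediately yields equality of all Jordan tuples and all horizontal and vertical indices, which is the claim.
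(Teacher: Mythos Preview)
Your approach is essentially the same as the paper's: identify the operator for $\rho^{\oplus_m}$ as $R_X(Y)=YX$ and for the dual as $\hat R_X(Y)=-Y^T X$, then use the linear automorphism $\theta(Y)=-Y^T$ of $\mathfrak{g}$ to relate them. However, you have introduced an unnecessary complication. No transposition on the target is involved at all: since
\[
\hat R_X(Y)\;=\;-Y^T X\;=\;R_X(-Y^T)\;=\;R_X\bigl(\theta(Y)\bigr),
\]
we have $\hat R_X = R_X\circ\theta$ with the \emph{identity} on the target $\operatorname{Mat}_{n\times m}$. Thus the two pencils are strictly equivalent via $P=\mathrm{id}$ and $Q=\theta$, and your worry about transposition swapping horizontal and vertical Kronecker indices is moot. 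The paper records exactly this as a commutative square with $\phi=\theta$ on the source and $\psi=\mathrm{id}$ on the target; once you drop the spurious ``transpose on the target'' the argument is complete and identical to the paper's.
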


Note that in Lemma~\ref{L:SelfDualJK} the representation $\rho$ doesn't have to be self-dual. 

\begin{proof}[Proof of Lemma~\ref{L:SelfDualJK}] Denote $\rho_m = \rho^{\oplus_m}$, for short. Identify $\operatorname{Mat}^*_{n\times m}$ with $\operatorname{Mat}_{n \times m}$ by  \[ \langle A, B \rangle = \operatorname{tr} \left(A^T B \right).\] The dual map $\rho_m^*$ is given by the formula \[ \rho_m^*(Y) X = -Y^T X. \] Thus, the corresponding pencil is given by the following linear map: \begin{align*} \hat{R}_X: \operatorname{Mat}_{n\times n} & \to \operatorname{Mat}_{n\times m} \\ Y & \to -Y^TX.\end{align*} The pencils $R_{X_1 + \lambda X_2}$ and $\hat{R}_{X_1 + \lambda X_2}$ are strictly equivalent, since there is the following commutative diagram:\[\begin{tikzcd}
\mathfrak{g} \arrow[r, "R_X"] \arrow[d, "\phi"]
& \operatorname{Mat}_{n \times m} \arrow[d, "\psi"] \\
\mathfrak{g} \arrow[r, "\hat{R}_X"]
& \operatorname{Mat}_{n \times m} 
\end{tikzcd}\] where \[ \phi(X) = - X^T, \qquad \psi(Y) = Y.\] Note that $\phi$ is a linear automophism of $\mathfrak{g}$ and may not preserve the structure of Lie algebra. Strictly equivalent pencils $R_{X_1 + \lambda X_2}$ and $\hat{R}_{X_1 + \lambda X_2}$ have the same JK invariants. Lemma~\ref{L:SelfDualJK} is proved. \end{proof}

\subsubsection{Standard representation of semi-simple Lie algebras} \label{S:SubSStandRep}

JK invariants for sums of $m$ standard representation of standard Lie algebras $\operatorname{gl}(n)$, $\operatorname{sl}(n)$, $\operatorname{so}(n)$ and $\operatorname{sp}(n)$ were calculated in 
\cite{BolsIzosKozl19} (see also \cite{Kozlov23}). By Lemma~\ref{L:SelfDualJK}  the JK invariants for the corresponding dual representations are the same. JK invariants for the corresponding semi-direct sums $\mathfrak{q} = \mathfrak{g} \ltimes \left(\mathbb{C}^n\right)^m$ were found in \cite{Vor1} and \cite{Vor2} except for some cases, when  $m < n$ and the Lie algebra is either $\operatorname{gl}(n)$ or $\operatorname{sl}(n)$. In these special cases the JK invariants are known in the following cases:  

\begin{itemize}
\item $\mathfrak{g} = \operatorname{gl}(n)$ or $\operatorname{sl}(n)$, and $n$ is a multiple of $m$ (see \cite{Vor2});

\item $\mathfrak{g} = \operatorname{sl}(n)$, $m < n$ and $n = \pm 1 \pmod{m} $ (see \cite{Kozlov24SL}).

\end{itemize}

The information is collected in Tables~\ref{table:1} and \ref{table:2}.  

\begin{enumerate}
    \item  If $m > n$, then there are only Kronecker blocks and we can apply Theorem~\ref{T:JKInvSumDualRepr}. It is easy to see that the right columns of Tables~\ref{table:1} and \ref{table:2} "coincide".  
    
    \item If $m = n$, then there are still no horizontal Kronecker blocks. Thus, the Kronecker indices in the "middle columns"  of Tables~\ref{table:1} and \ref{table:2} are the same. But Jordan blocks sometimes may "group together" for the corresponding Lie algebra (as in the case of $\operatorname{gl}(n)$).

    \item If $m < n$, then we can not apply Theorem~\ref{T:JKInvSumDualRepr}. There is no clear connection between the left columns of Tables~\ref{table:1} and \ref{table:2}.

\end{enumerate}

\newpage

\begin{table}[h!]
\centering
\renewcommand{\arraystretch}{1.5}
\begin{tabular}{|p{15mm}| p{45mm} | p{45mm} | p{45mm}|} 
\hline   & $m < n$  \newline  $m = q (n-m) + r$  & $m = n$ & $m> n$ \newline $n = q (m-n) + r$ \\ [0.5ex]  \hline\hline  \rule{0pt}{5ex}
$\operatorname{gl}(n)$ 
 & 
 Horizontal indices: \newline $ \underbrace{q+1, \dots, q+1}_{n(n-m-r)},$  \newline\newline $\underbrace{q+2, \dots, q+2}_{nr}$ 
 & Jordan tuples: \newline $J_{\lambda_j}(\underbrace{1, \dots, 1}_n), \,\, j=1,\dots, n$ &  
 Vertical indices: \newline $\underbrace{q+1, \dots, q+1}_{n(m-n-r)}, $ \newline \newline $\underbrace{q+2, \dots, q+2}_{nr}$ \\ [5ex]   \hline \rule{0pt}{5ex}
 $\operatorname{sl}(n)$ &  Horizontal indices: \newline 
 $\underbrace{q+1, \dots, q+1}_{n(n-m-r) -(q+1)}$ 
 \newline\newline $\underbrace{q+2, \dots, q+2}_{nr+q}$
 & 
Vertical index: \newline $\mathrm{v}_1 =n$ 
\newline
 \newline
Jordan tuples: \newline $J_{\lambda_j}(\underbrace{1, \dots, 1}_{n-1}), \,\, j=1,\dots, n$
&  Vertical indices $(r \not = 0)$: \newline $\underbrace{q+1, \dots, q+1}_{n(m-n-r) + (q+2) }$  \newline\newline $\underbrace{q+2, \dots, q+2}_{nr - (q+1)}$ \newline  \newline Vertical indices $(r = 0)$: $\underbrace{q, \dots, q}_{q+1}, \,\, \underbrace{q+1, \dots, q+1}_{n(m-n)-q}$  \\ [5ex]   \hline \rule{0pt}{5ex}
 $\operatorname{so}(n)$ \newline $(\varepsilon = -1)$ \newline \newline and \newline \newline $\operatorname{sp}(n)$ \newline $(\varepsilon = 1)$
  & Horizontal indices:  \newline $\underbrace{2q+1, \dots, 2q+1}_{\frac{(n-m-r)(n-m-r + \varepsilon)}{2}}$  \newline \newline $\underbrace{2q+2, \dots, 2q+2}_{(n-m-r)r}$  \newline \newline $\underbrace{2q+3, \dots, 2q+3}_{ \frac{r(r + \varepsilon)}{2}}$ \newline  \newline Vertical indices:  $\underbrace{2, \dots, 2}_{\frac{m(m - \varepsilon)}{2}}$ \newline
  & 
 Vertical indices, $\operatorname{so}(n)$:   \newline $\underbrace{1, \dots, 1}_{n}, \qquad \underbrace{2, \dots, 2}_{\frac{n(n - 1)}{2} }$  \newline   \newline  Vertical indices, $\operatorname{sp}(n)$: \newline $ \underbrace{2, \dots, 2}_{\frac{n(n - 1 )}{2} }$ \newline  \newline Jordan tuples, $\operatorname{sp}(n)$:   \newline $J_{\lambda_j}(1), \quad j=1,\dots, n$
  & Vertical indices: \newline $\underbrace{1, \dots, 1}_{(m-n -\varepsilon)n}, \quad \underbrace{2, \dots, 2}_{\frac{n(n + \varepsilon)}{2} }$  \\ [5ex] 
 \hline
\end{tabular}
\caption{JK invariants for sums of $m$ standard representations of simple Lie algebras}
\label{table:1}
\end{table}

\newpage

\begin{table}[h!]
\centering
\renewcommand{\arraystretch}{1.5}
\begin{tabular}{|p{15mm}| p{65mm} | p{35mm} | p{50mm}|} 
\hline $\mathfrak{g}$  & $m < n$  \newline  $n = l m + d$  & $m = n$ & $m> n$ \newline $n = q (m-n) + r$ \\ [0.5ex]  \hline\hline  \rule{0pt}{5ex}
$\operatorname{gl}(n)$ 
 & 
Jordan tuples $(d = 0)$: \newline $J^{\operatorname{skew}}_{\lambda_j}(\underbrace{2, \dots, 2}_{m-2}, 4),$ \, $j = 1, \dots, \frac{m l (l+1)}{2}$ 
 & Jordan tuples: \newline $J^{\operatorname{skew}}_{\lambda_j}(\underbrace{2, \dots, 2}_{n-2}, 4)$,  $j=1,\dots, n$ &  
 Kronecker indices: \newline $\underbrace{q+1, \dots, q+1}_{n(m-n-r)}, $  \,\ $\underbrace{q+2, \dots, q+2}_{nr}$ \\ [5ex]   \hline \rule{0pt}{5ex}
 $\operatorname{sl}(n)$ &  Kronecker index $(d=0)$: \newline $\frac{ml(l+1)}{2}$, \newline \newline Jordan tuples $(d=0)$: \newline $J^{\operatorname{skew}}_{\lambda_j}(\underbrace{2, \dots, 2}_{m-1}),$  \,\ $j = 1, \dots, \frac{ml(l+1)}{2}$ \newline \newline  Kronecker indices ($d = 1$ or $m-1$):  \newline $ \underbrace{\frac{(l+1)(n+d)}{2}, \dots, \frac{(l+1)(n+d)}{2}}_{m}$ \newline 
 & 
Kronecker index: \newline $k_1 = n$ 
\newline
 \newline
Jordan tuples: \newline $J^{\operatorname{skew}}_{\lambda_j}(\underbrace{2, \dots, 2}_{n-1})$, \newline \newline $ j=1,\dots, n$
&   Kronecker indices $(r \not = 0)$: \newline $\underbrace{q+1, \dots, q+1}_{n(m-n-r) + q+2 }$,   $\underbrace{q+2, \dots, q+2}_{nr -(q+1)}$ \newline  \newline  Kronecker indices $(r = 0)$: $\underbrace{q, \dots, q}_{q+1},$ \,\, $\underbrace{q+1, \dots, q+1}_{n(m-n)-q}$  \\ [5ex]   \hline \rule{0pt}{5ex}
 $\operatorname{so}(n)$ \newline $\varepsilon = -1$ 
  &  Kronecker indices  $(n+m = 2a + 1)$: \newline $\underbrace{2, \dots, 2}_{\frac{m(m - \varepsilon)}{2}}$, \,\,  $\underbrace{2m+2, \dots,  n+m-1}_{\text{even numbers}}$ \newline  \newline Kronecker indices,  $(n+m = 2a)$: \newline $\underbrace{2, \dots, 2}_{\frac{m(m - \varepsilon)}{2}}$, \, $\underbrace{2m+2, \dots, n+m-2}_{\text{even numbers}}, \, \frac{n+m}{2}$
  & 
 Kronecker indices:   \newline $\underbrace{1, \dots, 1}_{n},  \,\,\underbrace{2, \dots, 2}_{\frac{n(n+ \varepsilon)}{2} }$ 
  &  Kronecker indices: \newline $\underbrace{1, \dots, 1}_{(m-n -\varepsilon)n}$, \,\, $\underbrace{2, \dots, 2}_{\frac{n(n + \varepsilon)}{2} }$  \\ [5ex] 
 \hline
$\operatorname{sp}(n)$ \newline $\varepsilon = 1$
  &  Kronecker indices $(n+m = 2a+1)$: \newline $\underbrace{2, \dots, 2}_{\frac{m(m - \varepsilon)}{2}}$, \,\, $\underbrace{2m+1, \dots, n+m}_{\text{odd numbers}}$ \newline \newline Kronecker indices $(n+m = 2a)$: \newline $\underbrace{2, \dots, 2}_{\frac{m(m - \varepsilon)}{2}}$,  \,\, $\underbrace{2m+2, \dots, n+m}_{\text{even numbers}}$ \newline \newline Jordan tuples $(n+m = 2a)$: \newline $J^{\operatorname{skew}}_{\lambda_j}(2), \quad j=1,\dots, m$
  & 
  Kronecker indices: \newline $ \underbrace{2, \dots, 2}_{\frac{n(n- 1)}{2} }$ \newline  \newline Jordan tuples:   \newline $J^{\operatorname{skew}}_{\lambda_j}(2)$, \newline $j=1,\dots, n$
  &  Kronecker indices: \newline $\underbrace{1, \dots, 1}_{(m-n -\varepsilon)n}, \quad \underbrace{2, \dots, 2}_{\frac{n(n + \varepsilon)}{2} }$  \\ [5ex] 
 \hline
\end{tabular}
\caption{JK invariants for semi-direct sums $\mathfrak{q} = \mathfrak{g} \ltimes \left(\mathbb{C}^n\right)^m$}
\label{table:2}
\end{table}

\end{document}